\newtheorem{assumption}[theorem]{Assumption}
\begin{document}
\setlength\arraycolsep{2pt}
\title{Nonparametric Estimation for I.I.D. Paths of a Martingale Driven Model with Application to Non-Autonomous Financial Models}
\titlerunning{Nonparametric Estimation for I.I.D. Paths of a Martingale Driven Model}
\author{Nicolas MARIE}
\institute{Laboratoire Modal'X, Universit\'e Paris Nanterre, Nanterre, France\\
\email{nmarie@parisnanterre.fr}}
\maketitle
%

% Abstract.

%
\begin{abstract}
This paper deals with a projection least squares estimator of the function $J_0$ computed from multiple independent observations on $[0,T]$ of the process $Z$ defined by $dZ_t = J_0(t)d\langle M\rangle_t + dM_t$, where $M$ is a continuous and square integrable martingale vanishing at $0$. Risk bounds are established on this estimator, on an associated adaptive estimator and on an associated discrete-time version used in practice. An appropriate transformation allows to rewrite the differential equation $dX_t = V(X_t)(b_0(t)dt +\sigma(t)dB_t)$, where $B$ is a fractional Brownian motion of Hurst parameter $H\in [1/2,1)$, as a model of the previous type. So, the second part of the paper deals with risk bounds on a nonparametric estimator of $b_0$ derived from the results on the projection least squares estimator of $J_0$. In particular, our results apply to the estimation of the drift function in a non-autonomous Black-Scholes model and to nonparametric estimation in a non-autonomous fractional stochastic volatility model.
\keywords{Projection least squares estimator \and Model selection \and Fractional Brownian motion \and Stochastic differential equations \and Stochastic volatility}
\subclass{60H10 \and 60H30 \and 62G05}
\end{abstract}
\noindent
{\bf JEL classification:} C22
%

% Section : Introduction.

%
\section{Introduction}\label{introduction_section}
Since the 1980's, the statistical inference for stochastic differential equations (SDE) driven by a Brownian motion has been widely investigated by many authors in the parametric and in the nonparametric frameworks. Classically, the estimators of the drift function are computed from one path of the solution to the SDE and converge when the time horizon $T > 0$ goes to infinity. The existence and the uniqueness of the stationary solution to the SDE are then required, and obtained thanks to restrictive conditions on the drift function. On such estimation methods, the reader can refer to Hoffmann \cite{HOFFMANN99} on a discrete-time wavelet based estimator, to Kutoyants \cite{KUTOYANTS04} (see Section 1.3.2 and Chapter 4) and Dalalyan \cite{DALALYAN05} on continuous-time kernel based estimators, to Comte et al. \cite{CGCR07} on a continuous-time projection least squares estimator, etc. Since few years, a new type of parametric and nonparametric estimators is investigated; those computed from multiple independent observations on $[0,T]$ of the SDE solution. On such estimation methods in the parametric framework, the reader can refer to Ditlevsen and De Gaetano \cite{DDG05} on a discrete-time maximum likelihood estimator of the drift parameter for SDE models with random effects, to Picchini, De Gaetano and Ditlevsen \cite{PDGD10} on an approximate maximum likelihood procedure for the estimation of both non-random parameters and the random effects, to Delattre and Lavielle \cite{DL13} on the SAEM algorithm combined with the extended Kalman filter to estimate the population parameters, to Delattre, Genon-Catalot and Lar\'edo \cite{DGCL13} on a discrete-time approximate maximum likelihood estimator of random effects in the drift and in the diffusion coefficients, etc. In the nonparametric framework, some copies based estimation methods of the drift function have been recently investigated. Precisely, the reader can refer to Comte and Genon-Catalot \cite{CGC20b,CGC21} on a continuous-time projection least squares estimator extended from nonparametric regression (see Cohen et al. \cite{CDL13}, Comte and Genon-Catalot \cite{CGC20a}, etc.) to diffusion processes, to Della Maestra and Hoffmann \cite{DMH21} on a continuous-time Nadaraya-Watson estimator in interacting particle systems, to Denis et al. \cite{DDM21} on a discrete-time nonparametric ridge estimator, to Marie and Rosier \cite{MR21} on both continuous-time and discrete-time versions of a Nadaraya-Watson estimator with a PCO bandwidths selection method, etc. Our paper deals with a nonparametric estimation problem of similar kind.
\\
\\
Consider the stochastic process $Z = (Z_t)_{t\in [0,T]}$, defined by
\begin{equation}\label{main_equation}
Z_t =\int_{0}^{t}J_0(s)d\langle M\rangle_s + M_t
\textrm{$;$ }
\forall t\in [0,T],
\end{equation}
where $M = (M_t)_{t\in [0,T]}\not= 0$ is a continuous and square integrable martingale vanishing at $0$, and $J_0$ is an unknown function which almost surely belongs to $\mathbb L^2([0,T],d\langle M\rangle_t)$. Under these conditions on $M$ and $J_0$, the quadratic variation $\langle M\rangle_t$ of $M$ is well-defined for any $t\in [0,T]$, and the Riemann-Stielj\`es integral of $J_0$ with respect to $s\mapsto\langle M\rangle_s$ on $[0,t]$ exists and is finite. So, the existence and the uniqueness of the process $Z$ are ensured. By assuming that $\langle M\rangle_t$ is deterministic for every $t\in [0,T]$, our paper deals with the estimator $\widehat J_{m,N}$ of $J_0$ minimizing the objective function
\begin{displaymath}
J\longmapsto
\gamma_N(J) :=
\frac{1}{N}\sum_{i = 1}^{N}\left(
\int_{0}^{T}J(s)^2d\langle M^i\rangle_s -
2\int_{0}^{T}J(s)dZ_{s}^{i}\right)
\end{displaymath}
on a $m$-dimensional function space $\mathcal S_m$, where $M^1,\dots,M^N$ (resp. $Z^1,\dots,Z^N$) are $N\in\mathbb N^*$ independent copies of $M$ (resp. $Z$) and $m\in\{1,\dots,N\}$. Precisely, risk bounds are established on $\widehat J_{m,N}$ and on the adaptive estimator $\widehat J_{\widehat m,N}$, where
\begin{displaymath}
\widehat m =\arg\min_{m\in\mathcal M_N}
\{\gamma_N(\widehat J_{m,N}) + {\rm pen}(m)\}
\end{displaymath}
with $\mathcal M_N\subset\{1,\dots,N\}$,
\begin{displaymath}
{\rm pen}(m) :=\mathfrak c_{\rm cal}\frac{m}{N}
\textrm{$;$ }
\forall m\in\mathbb N
\end{displaymath}
and $\mathfrak c_{\rm cal} > 0$ is a constant to calibrate in practice. Now, consider the differential equation
\begin{equation}\label{main_fDE}
X_t =
X_0 +\int_{0}^{t}V(X_s)(b_0(s)ds +\sigma(s)dB_s)
\textrm{$;$ }
t\in [0,T],
\end{equation}
where $X_0$ is a $\mathbb R\backslash\{0\}$-valued random variable, $B = (B_t)_{t\in [0,T]}$ is a fractional Brownian motion of Hurst parameter $H\in [1/2,1)$, the stochastic integral with respect to $B$ is taken pathwise (in Young's sense) when $H > 1/2$ and in It\^o's sense when $H = 1/2$, and $V :\mathbb R\rightarrow\mathbb R$, $\sigma : [0,T]\rightarrow\mathbb R\backslash\{0\}$ and $b_0 : [0,T]\rightarrow\mathbb R$ are at least continuous. An {\it appropriate} transformation (see Subsection \ref{subsection_auxiliary_model}) allows to rewrite Equation \eqref{main_fDE} as a model of type \eqref{main_equation} driven by the Molchan martingale which quadratic variation is $t^{2 - 2H}$ for every $t\in [0,T]$. So, our paper also deals with a risk bound on an estimator of $b_0/\sigma$ derived from $\widehat J_{m,N}$. Up to our knowledge, only Comte and Marie \cite{CM21} deals with a nonparametric estimator of the drift function computed from multiple independent observations on $[0,T]$ of the solution to a fractional SDE. Finally, applications in mathematical finance are provided. On the one hand, an estimator of the drift function in a non-autonomous Black-Scholes model is given at Subsection \ref{subsection_Black_Scholes}. On the other hand, let us consider the fractional stochastic volatility model
\begin{equation}\label{fractional_stochastic_volatility_model}
\left\{
\begin{array}{rcl}
 dS_t & = & S_t(b(t)dt +\sigma_tdW_t)\\
 d\sigma_t & = & \sigma_t(\rho_0(t)dt +\upsilon dB_t)
\end{array}\right.
\end{equation}
where $S_0$ and $\sigma_0$ are $(0,\infty)$-valued random variables, $W = (W_t)_{t\in [0,T]}$ is a Brownian motion, $\upsilon > 0$ and $b,\rho_0\in C^0(\mathbb R_+;\mathbb R)$. This is a non-autonomous extension, with fractional volatility, of the stochastic volatility model studied in Wiggins \cite{WIGGINS87}. To take $H\in [1/2,1)$ allows to take into account the persistance in volatility phenomenon (see Comte et al. \cite{CCR12}). An estimator of $\rho_0$ is given at Subsection \ref{subsection_fractional_stochastic_volatility}.
\\
\\
At Section \ref{projection_LS_estimator_section}, a detailed definition of the projection least squares estimator of $J_0$ is provided. Section \ref{risk_bounds_model_selection_section} deals with risk bounds on $\widehat J_{m,N}$, on the adaptive estimator $\widehat J_{\widehat m,N}$ and on a discrete-time version of $\widehat J_{m,N}$ used in practice. At Section \ref{section_application_fDE}, the results of Section \ref{risk_bounds_model_selection_section} on the estimator of $J_0$ are applied to the estimation of $b_0$ in Equation \eqref{main_fDE} and then to the estimator of the drift function (resp. $\rho_0$) in the non-autonomous Black-Scholes model (resp. in Equation \eqref{fractional_stochastic_volatility_model}). Finally, some numerical experiments are provided at Section \ref{section_numerical_experiments}; in Model \eqref{main_equation} when $M$ is the Molchan martingale, and in the non-autonomous Black-Scholes model.
\\
\\
{\bf Notations:}
\begin{itemize}
 \item $\langle .,.\rangle_{2,m}$ is the usual scalar product on $\mathbb R^m$, and $\|.\|_{2,m}$ is the associated norm.
 \item $\|.\|_{\rm op}$ is the spectral norm on the space $\mathcal M_m(\mathbb R)$ of $m\times m$ real matrices.
 \item For any $p\geqslant 1$, the usual norm on $\mathbb L^p([0,T],dt)$ is denoted by $\|.\|_p$.
 \item For every closed and convex subset $C$ of a Hilbert space $H$, $p_{C}^{\perp}(.)$ is the orthogonal projection from $H$ onto $C$.
 \item For every bounded function $\varphi : [0,T]\rightarrow\mathbb R$,
 \begin{displaymath}
 \|\varphi\|_{\infty,T} :=
 \sup_{t\in [0,T]}|\varphi(t)|.
 \end{displaymath}
 \item For any finite set $E$, $|E|$ is its cardinality.
\end{itemize}
%

% Section : A projection least squares estimator of the map J_0.

%
\section{A projection least squares estimator of the map $J_0$}\label{projection_LS_estimator_section}
In the sequel, the quadratic variation $\langle M\rangle = (\langle M\rangle_t)_{t\in [0,T]}$ of $M$ fulfills the following assumption.
%

% Assumption : Assumption on the quadratic variation of M.

%
\begin{assumption}\label{assumption_quadratic_variation}
The (nonnegative, increasing and continuous) process $\langle M\rangle$ is a deterministic function.
\end{assumption}
\noindent
Assumption \ref{assumption_quadratic_variation} is fulfilled by the Brownian motion and, more generally, by any martingale $(M_t)_{t\in [0,T]}$ such that
\begin{displaymath}
M_t =
\int_{0}^{t}\zeta(s)dW_s
\textrm{$;$ }
\forall t\in [0,T],
\end{displaymath}
where $W$ is a Brownian motion and $\zeta\in\mathbb L^2([0,T],dt)$. For some results, $\langle M\rangle$ fulfills the following stronger assumption.
%

% Assumption : Strong assumption on the quadratic variation of M.

%
\begin{assumption}\label{assumption_quadratic_variation_strong}
There exists $\mu\in C^0((0,T];\mathbb R_+)$ such that $\mu(.)^{-1}$ is continuous from $[0,T]$ into $\mathbb R_+$, and such that
\begin{displaymath}
\langle M\rangle_t =
\int_{0}^{t}\mu(s)ds
\textrm{$;$ }
\forall t\in [0,T].
\end{displaymath}
\end{assumption}
\noindent
Here again, Assumption \ref{assumption_quadratic_variation_strong} is fulfilled by the Brownian motion. Assumption \ref{assumption_quadratic_variation_strong} is also fulfilled by any martingale $(M_t)_{t\in [0,T]}$ such that
\begin{displaymath}
M_t =\int_{0}^{t}\zeta(s)dW_s
\textrm{$;$ }\forall t\in [0,T],
\end{displaymath}
where $W$ is a Brownian motion, $\zeta\in C^0((0,T];\mathbb R)$ and $\zeta(.)^{-1}$ is continuous from $[0,T]$ into $\mathbb R$. This last condition is satisfied, for instance, when $\zeta$ is a $(c,\infty)$-valued function with $c > 0$, or when $\zeta(t) = t^{-\kappa}$ for every $t\in (0,T]$ ($\kappa > 0$). For instance, let $M$ be the Molchan martingale defined by
\begin{displaymath}
M_t :=
\int_{0}^{t}\ell(t,s)dB_s
\textrm{$;$ }
\forall t\in [0,T],
\end{displaymath}
where $B$ is a fractional Brownian motion of Hurst parameter $H\in [1/2,1)$, and
\begin{displaymath}
\ell(t,s) :=\mathfrak c_Hs^{1/2 - H}(t - s)^{1/2 - H}
\mathbf 1_{(0,t)}(s)
\textrm{$;$ }
\forall s,t\in [0,T]
\end{displaymath}
with
\begin{displaymath}
\mathfrak c_H =
\left(\frac{\Gamma(3 - 2H)}{2H\Gamma(3/2 - H)^3\Gamma(H + 1/2)}\right)^{1/2}.
\end{displaymath}
Since
\begin{displaymath}
M_t =
(2 - 2H)^{1/2}
\int_{0}^{t}s^{1/2 - H}dW_s
\textrm{$;$ }
\forall t\in [0,T],
\end{displaymath}
where $W$ is the Brownian motion driving the Mandelbrot-Van Ness representation of the fractional Brownian motion $B$, the Molchan martingale fulfills Assumption \ref{assumption_quadratic_variation_strong} with $\mu(t) = (2 - 2H)t^{1 - 2H}$ for every $t\in (0,T]$.
%

% Subsection : The objective function.

%
\subsection{The objective function}
In order to define a least squares projection estimator of $J_0$, let us consider $N\in\mathbb N^*$ independent copies $M^1,\dots,M^N$ (resp. $Z^1,\dots,Z^N$) of $M$ (resp. $Z$), and the objective function $\gamma_N$ defined by
\begin{displaymath}
\gamma_N(J) :=
\frac{1}{N}\sum_{i = 1}^{N}\left(
\int_{0}^{T}J(s)^2d\langle M^i\rangle_s -
2\int_{0}^{T}J(s)dZ_{s}^{i}\right)
\end{displaymath}
for every $J\in\mathcal S_m$, where $m\in\{1,\dots,N\}$, $\mathcal S_m := {\rm span}\{\varphi_1,\dots,\varphi_m\}$ and $\varphi_1,\dots,\varphi_N$ are continuous functions from $[0,T]$ into $\mathbb R$ such that $(\varphi_1,\dots,\varphi_N)$ is an orthonormal family in $\mathbb L^2([0,T],dt)$.
\\
\\
{\bf Remark.} Note that since $t\in [0,T]\mapsto\langle M\rangle_t$ is nonnegative, increasing and continuous, and since the $\varphi_j$'s are continuous from $[0,T]$ into $\mathbb R$, the objective function $\gamma_N$ is well-defined.
\\
\\
For any $J\in\mathcal S_m$,
\begin{eqnarray*}
 \mathbb E[\gamma_N(J)] & = &
 \int_{0}^{T}J(s)^2d\langle M\rangle_s -
 2\int_{0}^{T}J(s)J_0(s)d\langle M\rangle_s -
 2\mathbb E\left[\int_{0}^{T}J(s)dM_s\right]\\
 & = &
 \int_{0}^{T}(J(s) - J_0(s))^2d\langle M\rangle_s -
 \int_{0}^{T}J_0(s)^2d\langle M\rangle_s.
\end{eqnarray*}
Then, the more $J$ is {\it close} to $J_0$, the more $\mathbb E[\gamma_N(J)]$ is small. For this reason, the estimator of $J_0$ minimizing $\gamma_N$ is studied in this paper.
%

% Subsection : The projection least squares estimator.

%
\subsection{The projection least squares estimator}\label{projection_LS_estimator_definition_subsection}
Consider
\begin{displaymath}
J :=
\sum_{j = 1}^{m}\theta_j\varphi_j
\quad {\rm with}\quad
\theta_1,\dots,\theta_m\in\mathbb R.
\end{displaymath}
Then,
\begin{eqnarray*}
 \nabla\gamma_N(J) & = &
 \left(\frac{1}{N}\sum_{i = 1}^{N}\left(
 2\sum_{k = 1}^{m}\theta_k\int_{0}^{T}
 \varphi_j(s)\varphi_k(s)d\langle M^i\rangle_s
 - 2\int_{0}^{T}\varphi_j(s)dZ_{s}^{i}\right)\right)_{j\in\{1,\dots,m\}}\\
 & = &
 2(\Psi_m(\theta_1,\dots,\theta_m)^* - z_{m,N})
\end{eqnarray*}
where
\begin{displaymath}
\Psi_m :=\left(\int_{0}^{T}\varphi_j(s)\varphi_k(s)d\langle M\rangle_s\right)_{j,k\in\{1,\dots,m\}}
\quad {\rm and}\quad
z_{m,N} :=\left(\frac{1}{N}\sum_{i = 1}^{N}\int_{0}^{T}\varphi_j(s)dZ_{s}^{i}\right)_{j\in\{1,\dots,m\}}.
\end{displaymath}
Moreover, the symmetric matrix $\Psi_m$ is nonnegative because under Assumption \ref{assumption_quadratic_variation},
\begin{displaymath}
u^*\Psi_mu =
\int_{0}^{T}\left(\sum_{j = 1}^{m}u_j\varphi_j(s)\right)^2d\langle M\rangle_s\geqslant 0
\end{displaymath}
for every $u\in\mathbb R^m$. In fact, since $\varphi_1,\dots,\varphi_m$ are linearly independent, $\Psi_m$ is even a positive-definite matrix, and thus $\gamma_N$ has a unique minimum in $\mathcal S_m$. This legitimates to consider the estimator
\begin{equation}\label{estimator}
\widehat J_{m,N} =\arg\min_{J\in\mathcal S_m}\gamma_N(J)
\end{equation}
of $J_0$, and since $\nabla\gamma_N(\widehat J_{m,N}) = 0$,
\begin{displaymath}
\widehat J_{m,N} =
\sum_{j = 1}^{m}\widehat\theta_j\varphi_j
\end{displaymath}
with
\begin{displaymath}
\widehat\theta_{m,N} :=
(\widehat\theta_1,\dots,\widehat\theta_m)^* =
\Psi_{m}^{-1}z_{m,N}.
\end{displaymath}
In practice, since the process $Z$ cannot be observed continuously on the time interval $[0,T]$, the vector $z_{m,N}$ has to be replaced by the approximation
\begin{displaymath}
z_{m,N,n} :=
\left(\frac{1}{N}\sum_{i = 1}^{N}\sum_{l = 0}^{n - 1}
\varphi_j(t_l)(Z_{t_{l + 1}}^{i} - Z_{t_l}^{i})\right)_{j\in\{1,\dots,m\}}
\end{displaymath}
in the definition of $\widehat J_{m,N}$, where $t_l := lT/n$ for every $l\in\{0,\dots,n\}$. This leads to the discrete-time estimator
\begin{displaymath}
\widehat J_{m,N,n} :=
\sum_{j = 1}^{m}[\widehat\theta_{m,N,n}]_j\varphi_j
\quad {\rm with}\quad
\widehat\theta_{m,N,n} :=
\Psi_{m}^{-1}z_{m,N,n}.
\end{displaymath}
%

% Section : Risk bounds and model selection.

%
\section{Risk bounds and model selection}\label{risk_bounds_model_selection_section}
In the sequel, the space $\mathbb L^2([0,T],d\langle M\rangle_t)$ is equipped with the scalar product $\langle .,.\rangle_{\langle M\rangle}$ defined by
\begin{displaymath}
\langle\varphi,\psi\rangle_{\langle M\rangle} :=
\int_{0}^{T}\varphi(s)\psi(s)d\langle M\rangle_s
\end{displaymath}
for every $\varphi,\psi\in\mathbb L^2([0,T],d\langle M\rangle_t)$. The associated norm is denoted by $\|.\|_{\langle M\rangle}$.
\\
\\
First, the following proposition provides a risk bound on $\widehat J_{m,N}$ for a fixed $m\in\{1,\dots,N\}$.
%

% Proposition : Risk bound.

%
\begin{proposition}\label{risk_bound}
Under Assumption \ref{assumption_quadratic_variation},
\begin{equation}\label{risk_bound_1}
\mathbb E[\|\widehat J_{m,N} - J_0\|_{\langle M\rangle}^{2}]
\leqslant
\min_{J\in\mathcal S_m}\|J - J_0\|_{\langle M\rangle}^{2} +\frac{2m}{N}.
\end{equation}
\end{proposition}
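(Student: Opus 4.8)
The plan is to compare $\widehat J_{m,N}$ with its deterministic counterpart, the orthogonal projection of $J_0$ onto $\mathcal S_m$ with respect to $\langle\cdot,\cdot\rangle_{\langle M\rangle}$. Write $J_m := p_{\mathcal S_m}^{\perp}(J_0)$ for that projection; by Pythagoras, for any $J\in\mathcal S_m$ we have $\|J - J_0\|_{\langle M\rangle}^2 = \|J - J_m\|_{\langle M\rangle}^2 + \|J_m - J_0\|_{\langle M\rangle}^2$, so $\min_{J\in\mathcal S_m}\|J-J_0\|_{\langle M\rangle}^2 = \|J_m - J_0\|_{\langle M\rangle}^2$ and it suffices to control $\mathbb E[\|\widehat J_{m,N} - J_m\|_{\langle M\rangle}^2]$ by $2m/N$.

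First I would record the explicit error term. Plugging $dZ_s^i = J_0(s)\,d\langle M^i\rangle_s + dM_s^i$ into $z_{m,N}$, and using Assumption~\ref{assumption_quadratic_variation} (so $\langle M^i\rangle = \langle M\rangle$ is the same deterministic function for every $i$), one gets $z_{m,N} = \Psi_m (p_{\mathcal S_m}^\perp(J_0))\text{-coordinates} + \varepsilon_{m,N}$ where $\varepsilon_{m,N} := \left(\frac1N\sum_{i=1}^N \int_0^T \varphi_j(s)\,dM_s^i\right)_{j}$; more precisely, $\widehat\theta_{m,N} - \theta^{(m)} = \Psi_m^{-1}\varepsilon_{m,N}$, where $\theta^{(m)}$ is the coordinate vector of $J_m$ (here one uses that $\int_0^T \varphi_j J_0\, d\langle M\rangle = \int_0^T \varphi_j J_m\, d\langle M\rangle$ for $j\le m$, since $J_0 - J_m \perp \mathcal S_m$). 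Consequently $\|\widehat J_{m,N} - J_m\|_{\langle M\rangle}^2 = (\widehat\theta_{m,N}-\theta^{(m)})^* \Psi_m (\widehat\theta_{m,N}-\theta^{(m)}) = \varepsilon_{m,N}^* \Psi_m^{-1}\varepsilon_{m,N}$.

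Next I would take expectations. By the martingale property, each coordinate $[\varepsilon_{m,N}]_j$ has mean zero, and by the orthogonality of martingale increments across independent copies and the Itô isometry, $\mathbb E\big[[\varepsilon_{m,N}]_j[\varepsilon_{m,N}]_k\big] = \frac1N\mathbb E\big[\int_0^T\varphi_j\,dM_s\int_0^T\varphi_k\,dM_s\big] = \frac1N\int_0^T \varphi_j(s)\varphi_k(s)\,d\langle M\rangle_s = \frac1N[\Psi_m]_{jk}$. Hence $\mathbb E[\varepsilon_{m,N}\varepsilon_{m,N}^*] = \frac1N\Psi_m$, and therefore
\begin{displaymath}
\mathbb E\big[\|\widehat J_{m,N} - J_m\|_{\langle M\rangle}^2\big]
= \mathbb E\big[\varepsilon_{m,N}^*\Psi_m^{-1}\varepsilon_{m,N}\big]
= \mathrm{Tr}\big(\Psi_m^{-1}\mathbb E[\varepsilon_{m,N}\varepsilon_{m,N}^*]\big)
= \frac1N\mathrm{Tr}(\Psi_m^{-1}\Psi_m) = \frac{m}{N}.
\end{displaymath}
Combining with the Pythagoras decomposition gives $\mathbb E[\|\widehat J_{m,N}-J_0\|_{\langle M\rangle}^2] = \min_{J\in\mathcal S_m}\|J-J_0\|_{\langle M\rangle}^2 + m/N$, which is even slightly sharper than \eqref{risk_bound_1}; the stated bound $2m/N$ follows a fortiori.

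The main obstacle — really the only delicate point — is justifying the Itô-isometry identity $\mathbb E[\int_0^T\varphi_j\,dM\int_0^T\varphi_k\,dM] = \int_0^T\varphi_j\varphi_k\,d\langle M\rangle$ for a general continuous square-integrable martingale $M$ (not just Brownian motion) with deterministic bracket, and the well-definedness and measurability of the stochastic integrals $\int_0^T\varphi_j\,dM_s$ of the continuous deterministic integrands $\varphi_j$; this is standard (the $\varphi_j$ are bounded and predictable, so they lie in $\mathcal L^2(M)$ and the isometry holds), but it is the one place where Assumption~\ref{assumption_quadratic_variation} and the hypotheses on $M$ are genuinely used. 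Everything else is linear algebra (invertibility of $\Psi_m$, already established in the excerpt) and the trace trick for quadratic forms in a random vector.
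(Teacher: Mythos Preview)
Your proof is correct and in fact delivers the sharper identity
\[
\mathbb E[\|\widehat J_{m,N}-J_0\|_{\langle M\rangle}^{2}]
=\min_{J\in\mathcal S_m}\|J-J_0\|_{\langle M\rangle}^{2}+\frac{m}{N},
\]
so the stated bound with $2m/N$ follows. The paper takes a slightly different route: instead of the Pythagoras decomposition $\|\widehat J_{m,N}-J_0\|^2=\|\widehat J_{m,N}-J_m\|^2+\|J_m-J_0\|^2$, it starts from the contrast inequality $\gamma_N(\widehat J_{m,N})\leqslant\gamma_N(J)$, which yields
\[
\|\widehat J_{m,N}-J_0\|_{\langle M\rangle}^{2}\leqslant\|J-J_0\|_{\langle M\rangle}^{2}
+\frac{2}{N}\sum_i\int_0^T(\widehat J_{m,N}-J)\,dM_s^i,
\]
and then computes $\mathbb E\big[\frac{1}{N}\sum_i\int\widehat J_{m,N}\,dM_s^i\big]=\mathbb E[e^*\Psi_m^{-1}e]=m/N$ via exactly the same trace argument you use. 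The factor $2$ in the paper's bound is thus an artefact of passing through the inequality rather than the orthogonal decomposition. Your approach is cleaner for this fixed-$m$ statement; the paper's contrast-based formulation, on the other hand, is the template that is reused (and genuinely needed) in the proof of the adaptive bound in Theorem~\ref{risk_bound_adaptive_estimator}, where $\widehat m$ is random and no exact Pythagoras decomposition is available.
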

%

% Proof.

%
\begin{proof}
For every $J,K\in\mathcal S_m$,
\begin{eqnarray*}
 \gamma_N(J) -\gamma_N(K) & = &
 \|J\|_{\langle M\rangle}^{2} -\|K\|_{\langle M\rangle}^{2} -
 \frac{2}{N}\sum_{i = 1}^{N}\int_{0}^{T}(J(s) - K(s))dZ_{s}^{i}\\
 & = &
 \|J - J_0\|_{\langle M\rangle}^{2} -\|K - J_0\|_{\langle M\rangle}^{2} -
 \frac{2}{N}\sum_{i = 1}^{N}\int_{0}^{T}(J(s) - K(s))dM_{s}^{i}.
\end{eqnarray*}
Moreover,
\begin{displaymath}
\gamma_N(\widehat J_{m,N})\leqslant\gamma_N(J)
\textrm{$;$ }
\forall J\in\mathcal S_m.
\end{displaymath}
So,
\begin{displaymath}
\|\widehat J_{m,N} - J_0\|_{\langle M\rangle}^{2}
\leqslant
\|J - J_0\|_{\langle M\rangle}^{2} +
\frac{2}{N}\sum_{i = 1}^{N}\int_{0}^{T}(\widehat J_{m,N}(s) - J(s))dM_{s}^{i}
\end{displaymath}
for any $J\in\mathcal S_m$, and then
\begin{displaymath}
\mathbb E[\|\widehat J_{m,N} - J_0\|_{\langle M\rangle}^{2}]
\leqslant
\|J - J_0\|_{\langle M\rangle}^{2} +
2\mathbb E\left[\frac{1}{N}\sum_{i = 1}^{N}\int_{0}^{T}\widehat J_{m,N}(s)dM_{s}^{i}\right].
\end{displaymath}
Consider $j_0 = (\langle\varphi_j,J_0\rangle_{\langle M\rangle})_{j = 1,\dots,m}$, and $e = (e_1,\dots,e_m)^*$ such that
\begin{displaymath}
e_j :=
\frac{1}{N}\sum_{i = 1}^{N}\int_{0}^{T}\varphi_j(s)dM_{s}^{i}
\textrm{$;$ }
\forall j\in\{1,\dots,m\}.
\end{displaymath}
Since $e$ is a centered random vector,
\begin{eqnarray*}
 \mathbb E\left[\frac{1}{N}\sum_{i = 1}^{N}\int_{0}^{T}\widehat J_{m,N}(s)dM_{s}^{i}\right]
 & = &
 \sum_{j = 1}^{m}\mathbb E\left[
 \widehat\theta_j\cdot\frac{1}{N}\sum_{i = 1}^{N}\int_{0}^{T}\varphi_j(s)dM_{s}^{i}\right]\\
 & = &
 \mathbb E[\langle\widehat\theta,e\rangle_{2,m}] =
 \mathbb E[e^*\Psi_{m}^{-1}(j_0 + e)] =
 \mathbb E[e^*\Psi_{m}^{-1}e].
\end{eqnarray*}
Moreover, since $M_1,\dots,M_N$ are independent copies of $M$, and since $\Psi_m$ is a symmetric matrix,
\begin{eqnarray*}
 \mathbb E[e^*\Psi_{m}^{-1}e]
 & = &
 \sum_{j,k = 1}^{m}[\Psi_{m}^{-1}]_{j,k}\mathbb E[e_je_k] =
 \frac{1}{N}\sum_{j,k = 1}^{m}[\Psi_{m}^{-1}]_{j,k}
 \int_{0}^{T}\varphi_j(s)\varphi_k(s)d\langle M\rangle_s\\
 & = &
 \frac{1}{N}\sum_{k = 1}^{m}\sum_{j = 1}^{m}
 [\Psi_{m}]_{k,j}[\Psi_{m}^{-1}]_{j,k} =
 \frac{1}{N}\sum_{k = 1}^{m}[\Psi_m\Psi_{m}^{-1}]_{k,k} =
 \frac{m}{N}.
\end{eqnarray*}
Therefore,
\begin{displaymath}
\mathbb E[\|\widehat J_{m,N} - J_0\|_{\langle M\rangle}^{2}]
\leqslant
\min_{J\in\mathcal S_m}\|J - J_0\|_{\langle M\rangle}^{2} +\frac{2m}{N}.
\quad\qed
\end{displaymath}
\end{proof}
\noindent
Note that Inequality \eqref{risk_bound_1} says first that the bound on the variance of our least squares estimator of $J_0$ is of order $m/N$, as in the usual nonparametric regression framework. Under Assumption \ref{assumption_quadratic_variation_strong}, the following corollary provides a more understandable expression of the bound on the bias in Inequality \eqref{risk_bound_1}.
%

% Corollary : Weak risk bound.

%
\begin{corollary}\label{risk_bound_weak}
Under Assumption \ref{assumption_quadratic_variation_strong},
\begin{displaymath}
\mathbb E[\|\widehat J_{m,N} - J_0\|_{2}^{2}]
\leqslant
\|\mu(.)^{-1}\|_{\infty,T}
\|p_{\mathcal S_m(\mu)}^{\perp}(\mu^{1/2}J_0) -\mu^{1/2}J_0\|_{2}^{2} +
2\|\mu(.)^{-1}\|_{\infty,T}
\frac{m}{N}
\end{displaymath}
where
\begin{displaymath}
\mathcal S_m(\mu) :=
\{\iota\in\mathbb L^2([0,T],dt) :
\exists\varphi\in\mathcal S_m\textrm{, }
\forall t\in (0,T]\textrm{, }\iota(t) =\mu(t)^{1/2}\varphi(t)\}.
\end{displaymath}
\end{corollary}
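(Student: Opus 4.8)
The plan is to reduce the statement to Proposition~\ref{risk_bound} by means of the change of function $J\mapsto\mu^{1/2}J$, which turns the $\langle M\rangle$-norm into the usual norm of $\mathbb L^2([0,T],dt)$. Under Assumption~\ref{assumption_quadratic_variation_strong} one has $d\langle M\rangle_t =\mu(t)dt$, so for any measurable $\varphi$,
\[
\|\varphi\|_{\langle M\rangle}^{2} =\int_{0}^{T}\varphi(s)^2\mu(s)\,ds =\|\mu^{1/2}\varphi\|_{2}^{2}.
\]
Applying this with $\varphi =\widehat J_{m,N} - J_0$ and using the pointwise bound $1\leqslant\|\mu(.)^{-1}\|_{\infty,T}\,\mu(s)$ (valid because $\mu(.)^{-1}$ is continuous, hence bounded, on $[0,T]$), I would get
\[
\|\widehat J_{m,N} - J_0\|_{2}^{2} =\int_{0}^{T}(\widehat J_{m,N}(s) - J_0(s))^2\,ds \leqslant\|\mu(.)^{-1}\|_{\infty,T}\,\|\widehat J_{m,N} - J_0\|_{\langle M\rangle}^{2}.
\]
Taking expectations and invoking Proposition~\ref{risk_bound} then leaves only the task of identifying $\min_{J\in\mathcal S_m}\|J - J_0\|_{\langle M\rangle}^{2}$ with $\|p_{\mathcal S_m(\mu)}^{\perp}(\mu^{1/2}J_0) -\mu^{1/2}J_0\|_{2}^{2}$.

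For that last point I would first check that $\mathcal S_m(\mu)$ is a genuine finite-dimensional (hence closed) subspace of $\mathbb L^2([0,T],dt)$: each $\varphi_j$ is continuous, hence bounded, and $\int_{0}^{T}\mu(s)\,ds =\langle M\rangle_T <\infty$, so every $\mu^{1/2}\varphi_j$ belongs to $\mathbb L^2([0,T],dt)$; moreover, since $\mu(s) > 0$ for $s\in (0,T]$, the map $J\mapsto\mu^{1/2}J$ is linear and injective, so $\mathcal S_m(\mu) ={\rm span}\{\mu^{1/2}\varphi_1,\dots,\mu^{1/2}\varphi_m\}$ and $J$ runs over $\mathcal S_m$ if and only if $\mu^{1/2}J$ runs over $\mathcal S_m(\mu)$. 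Note also that $\mu^{1/2}J_0\in\mathbb L^2([0,T],dt)$ precisely because $J_0\in\mathbb L^2([0,T],d\langle M\rangle_t)$. Combining the identity $\|J - J_0\|_{\langle M\rangle}^{2} =\|\mu^{1/2}J -\mu^{1/2}J_0\|_{2}^{2}$ with the previous remark, the minimum over $\mathcal S_m$ equals the squared $\mathbb L^2([0,T],dt)$-distance from $\mu^{1/2}J_0$ to the closed subspace $\mathcal S_m(\mu)$, which is $\|p_{\mathcal S_m(\mu)}^{\perp}(\mu^{1/2}J_0) -\mu^{1/2}J_0\|_{2}^{2}$. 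Plugging this into the expectation of the displayed inequality yields the announced bound.

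I do not expect a serious obstacle: the argument is essentially a weighting/renorming trick. The only points deserving a line of care are the integrability of $\mu^{1/2}\varphi_j$ near $t = 0$, where $\mu$ may blow up (as in the Molchan example with $H > 1/2$), which is ensured by $\langle M\rangle_T <\infty$, and the bijectivity of the correspondence $\mathcal S_m\leftrightarrow\mathcal S_m(\mu)$, which relies on $\mu$ being positive on $(0,T]$.
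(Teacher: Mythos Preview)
Your proof is correct and follows essentially the same route as the paper: both multiply through by $\|\mu(.)^{-1}\|_{\infty,T}$ to pass from $\|\cdot\|_{2}$ to $\|\cdot\|_{\langle M\rangle}$, invoke Proposition~\ref{risk_bound}, and then identify the bias term as a projection distance via the change of variable $J\mapsto\mu^{1/2}J$. You in fact give slightly more detail than the paper does (integrability of $\mu^{1/2}\varphi_j$, bijectivity of $\mathcal S_m\leftrightarrow\mathcal S_m(\mu)$), which is fine.
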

%

% Proof.

%
\begin{proof}
Under Assumption \ref{assumption_quadratic_variation_strong},
\begin{displaymath}
\min_{J\in\mathcal S_m}
\|J - J_0\|_{\langle M\rangle}^{2} =
\min_{J\in\mathcal S_m}
\|\mu^{1/2}(J - J_0)\|_{2}^{2} =
\min_{\iota\in\mathcal S_m(\mu)}
\|\iota -\mu^{1/2}J_0\|_{2}^{2}
\end{displaymath}
with
\begin{displaymath}
\mathcal S_m(\mu) =
\{\iota\in\mathbb L^2([0,T],dt) :
\exists\varphi\in\mathcal S_m\textrm{, }
\forall t\in (0,T]\textrm{, }\iota(t) =\mu(t)^{1/2}\varphi(t)\}.
\end{displaymath}
Since $\mathcal S_m(\mu)$ is a closed vector subspace of $\mathbb L^2([0,T],dt)$,
\begin{equation}\label{risk_bound_weak_1}
\min_{\iota\in\mathcal S_m(\mu)}
\|\iota -\mu^{1/2}J_0\|_{2}^{2} =
\|p_{\mathcal S_m(\mu)}^{\perp}(\mu^{1/2}J_0) -\mu^{1/2}J_0\|_{2}^{2}.
\end{equation}
Moreover, since $\mu(.)^{-1}$ is continuous from $[0,T]$ into $\mathbb R_+$ under Assumption \ref{assumption_quadratic_variation_strong},
\begin{eqnarray}
 \|\widehat J_{m,N} - J_0\|_{2}^{2} & = &
 \|\mu^{-1/2}(\widehat J_{m,N} - J_0)\|_{\langle M\rangle}^{2}
 \nonumber\\
 \label{risk_bound_weak_2}
 & \leqslant &
 \|\mu(.)^{-1}\|_{\infty,T}
 \|\widehat J_{m,N} - J_0\|_{\langle M\rangle}^{2}.
\end{eqnarray}
Equality \eqref{risk_bound_weak_1} together with Inequality \eqref{risk_bound_weak_2} allow to conclude.\qed
\end{proof}
\noindent
For instance, assume that $\mathcal S_m = {\rm span}\{\overline\varphi_1,\dots,\overline\varphi_m\}$, where
\begin{displaymath}
\overline\varphi_1(t) :=
\sqrt{\frac{1}{\mu(t)T}},\quad
\overline\varphi_{2j}(t) :=
\sqrt{\frac{2}{\mu(t)T}}\cos\left(2\pi j\frac{t}{T}\right)
\quad {\rm and}\quad
\overline\varphi_{2j + 1}(t) :=
\sqrt{\frac{2}{\mu(t)T}}\sin\left(2\pi j\frac{t}{T}\right)
\end{displaymath}
for every $t\in [0,T]$ and $j\in\mathbb N^*$ satisfying $2j + 1\leqslant m$. The basis $(\varphi_1,\dots,\varphi_m)$ of $\mathcal S_m$, orthonormal in $\mathbb L^2([0,T],dt)$, is obtained from $(\overline\varphi_1,\dots,\overline\varphi_m)$ via the Gram-Schmidt process. Consider also the Sobolev space
\begin{displaymath}
\mathbb W_{2}^{\beta}([0,T]) :=
\left\{\iota\in C^{\beta - 1}([0,T];\mathbb R) :
\int_{0}^{T}\iota^{(\beta)}(t)^2dt <\infty\right\}
\textrm{$;$ }\beta\in\mathbb N^*,
\end{displaymath}
and assume that there exists $\iota_0\in\mathbb W_{2}^{\beta}([0,T])$ such that $\iota_0(t) =\mu(t)^{1/2}J_0(t)$ for every $t\in (0,T]$. Then, by DeVore and Lorentz \cite{DL93}, Chapter 7, Corollary 2.4, there exists a constant $\mathfrak c_{\beta,T} > 0$, not depending on $m$, such that
\begin{displaymath}
\|p_{\mathcal S_m(\mu)}^{\perp}(\mu^{1/2}J_0) -\mu^{1/2}J_0\|_{2}^{2} =
\|p_{\mathcal S_m(\mu)}^{\perp}(\iota_0) -\iota_0\|_{2}^{2}\leqslant
\mathfrak c_{\beta,T}m^{-2\beta}.
\end{displaymath}
Therefore, by Corollary \ref{risk_bound_weak},
\begin{displaymath}
\mathbb E[\|\widehat J_{m,N} - J_0\|_{2}^{2}]
\leqslant
\|\mu(.)^{-1}\|_{\infty,T}\left(\mathfrak c_{\beta,T}m^{-2\beta}
+\frac{2m}{N}\right).
\end{displaymath}
Now, consider $m_N\in\{1,\dots,N\}$, $\mathcal M_N :=\{1,\dots,m_N\}$ and
\begin{equation}\label{model_selection_method}
\widehat m =\arg\min_{m\in\mathcal M_N}
\{\gamma_N(\widehat J_{m,N}) + {\rm pen}(m)\}
\quad {\rm with}\quad
{\rm pen}(.) :=\mathfrak c_{\rm cal}\frac{.}{N},
\end{equation}
where $\mathfrak c_{\rm cal} > 0$ is a constant to calibrate in practice via, for instance, the {\it slope heuristic}. In the sequel, the $\varphi_j$'s fulfill the following assumption.
%

% Assumption : Nested spaces.

%
\begin{assumption}\label{assumption_nested}
For every $m,m'\in\{1,\dots,N\}$, if $m > m'$, then $\mathcal S_{m'}\subset\mathcal S_m$.
\end{assumption}
\noindent
The following theorem provides a risk bound on the adaptive estimator $\widehat J_{\widehat m,N}$.
%

% Theorem : Risk bound adaptive estimator.

%
\begin{theorem}\label{risk_bound_adaptive_estimator}
Under Assumptions \ref{assumption_quadratic_variation} and \ref{assumption_nested}, there exists a deterministic constant $\mathfrak c_{\ref{risk_bound_adaptive_estimator}} > 0$, not depending on $N$, such that
\begin{displaymath}
\mathbb E[\|\widehat J_{\widehat m,N} - J_0\|_{\langle M\rangle}^{2}]
\leqslant
\mathfrak c_{\ref{risk_bound_adaptive_estimator}}\left(
\min_{m\in\mathcal M_N}\{
\mathbb E[\|\widehat J_{m,N} - J_0\|_{\langle M\rangle}^{2}] + {\rm pen}(m)\} +
\frac{1}{N}\right).
\end{displaymath}
Moreover, under Assumption \ref{assumption_quadratic_variation_strong},
\begin{displaymath}
\mathbb E[\|\widehat J_{\widehat m,N} - J_0\|_{2}^{2}]
\leqslant
\mathfrak c_{\ref{risk_bound_adaptive_estimator}}
\|\mu(.)^{-1}\|_{\infty,T}\left(
\min_{m\in\mathcal M_N}\left\{
\|p_{\mathcal S_m(\mu)}^{\perp}(\mu^{1/2}J_0) -\mu^{1/2}J_0\|_{2}^{2} +
(2 +\mathfrak c_{\rm cal})\frac{m}{N}\right\} +
\frac{1}{N}\right).
\end{displaymath}
\end{theorem}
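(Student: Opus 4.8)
The plan is to establish the first inequality by a standard model-selection argument à la Barron--Birgé--Massart, adapted to this least squares context, and then deduce the second inequality by combining with the change-of-measure computation already performed in the proof of Corollary \ref{risk_bound_weak}.

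\textbf{Step 1: The basic deterministic inequality.} Using the identity for $\gamma_N(J)-\gamma_N(K)$ established in the proof of Proposition \ref{risk_bound}, and the fact that by definition of $\widehat m$ one has $\gamma_N(\widehat J_{\widehat m,N})+{\rm pen}(\widehat m)\leqslant\gamma_N(\widehat J_{m,N})+{\rm pen}(m)$ for every $m\in\mathcal M_N$, I would write, for an arbitrary fixed $m\in\mathcal M_N$ and arbitrary $J\in\mathcal S_m$,
\begin{displaymath}
\|\widehat J_{\widehat m,N}-J_0\|_{\langle M\rangle}^{2}
\leqslant
\|J-J_0\|_{\langle M\rangle}^{2}
+\frac{2}{N}\sum_{i=1}^{N}\int_{0}^{T}(\widehat J_{\widehat m,N}(s)-J(s))dM_{s}^{i}
+{\rm pen}(m)-{\rm pen}(\widehat m).
\end{displaymath}
The centered linear term must be controlled. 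Write $\nu_N(\psi):=\frac1N\sum_{i=1}^N\int_0^T\psi(s)dM_s^i$ for $\psi\in\mathbb L^2([0,T],d\langle M\rangle_t)$; this is a centered process with $\mathbb E[\nu_N(\psi)^2]=\|\psi\|_{\langle M\rangle}^2/N$. Since $\widehat J_{\widehat m,N}-J\in\mathcal S_{\max(m,\widehat m)}=:\mathcal S_{m\vee\widehat m}$ by Assumption \ref{assumption_nested}, bound $\nu_N(\widehat J_{\widehat m,N}-J)\leqslant\|\widehat J_{\widehat m,N}-J\|_{\langle M\rangle}\sup\{\nu_N(\psi):\psi\in\mathcal S_{m\vee\widehat m},\ \|\psi\|_{\langle M\rangle}=1\}$, then split with Young's inequality $2ab\leqslant \frac14 a^2+4b^2$ so that the $\|\widehat J_{\widehat m,N}-J\|_{\langle M\rangle}^2$-part can be absorbed (after the further split $\|\widehat J_{\widehat m,N}-J\|_{\langle M\rangle}^2\leqslant 2\|\widehat J_{\widehat m,N}-J_0\|_{\langle M\rangle}^2+2\|J-J_0\|_{\langle M\rangle}^2$) into the left-hand side.

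\textbf{Step 2: Control of the supremum of the empirical process.} The key quantity is $W_{m'}:=\sup\{\nu_N(\psi)^2:\psi\in\mathcal S_{m'},\ \|\psi\|_{\langle M\rangle}=1\}$ for $m'\in\mathcal M_N$. Expanding $\psi$ in an orthonormal basis of $(\mathcal S_{m'},\langle\cdot,\cdot\rangle_{\langle M\rangle})$ shows $W_{m'}=\|\Psi_{m'}^{-1/2}e^{(m')}\|_{2,m'}^2$ where $e^{(m')}$ is the vector $(\nu_N(\varphi_j))_{j\le m'}$, so that $\mathbb E[W_{m'}]=m'/N$ exactly as in Proposition \ref{risk_bound}. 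The point is that a \emph{pointwise} bound $\mathbb E[W_{m'}]$ is not enough; I need to subtract a penalty term and take the expectation of the positive part. So I would introduce $p(m,m'):=\kappa\,\frac{m\vee m'}{N}$ for a suitable constant $\kappa$ and bound
\begin{displaymath}
\mathbb E\Big[\Big(W_{m\vee\widehat m}-p(m,\widehat m)\Big)_+\Big]
\leqslant\sum_{m'\in\mathcal M_N}\mathbb E\Big[\big(W_{m'}-p(m,m')\big)_+\Big]
\leqslant\frac{C}{N},
\end{displaymath}
the last step requiring a concentration inequality for $W_{m'}$ around its mean. Here $\nu_N(\psi)=\frac1N\sum_i\int_0^T\psi\,dM^i$ is a sum of $N$ i.i.d. centered terms, but without a sub-Gaussian or boundedness assumption on $M$ one only has second moments, so I would expect to invoke a Chebyshev-type / $\mathbb L^2$ chaining argument giving control of $\mathbb E[W_{m'}]$ and of its fluctuations — or, more simply, use the fact that $NW_{m'}$ is a quadratic form whose expectation is $m'$ and bound the deviation via the variance of $\|\Psi_{m'}^{-1/2}e^{(m')}\|^2$ (a sum over $N$ i.i.d. contributions). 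The constant $\mathfrak c_{\rm cal}$ in ${\rm pen}$ must be chosen large enough relative to $\kappa$ so that $p(m,\widehat m)-{\rm pen}(\widehat m)\leqslant{\rm pen}(m)$ pointwise, which lets the leftover penalty be absorbed into the $\min_{m}\{\cdots+{\rm pen}(m)\}$ term. \textbf{This concentration/fluctuation control is the main obstacle}: everything else is the routine Barron--Birgé--Massart bookkeeping, but getting a genuine $O(1/N)$ remainder for the supremum over the (at most $m_N\leqslant N$) nested models, using only the square-integrability of $M$, is where the real work lies.

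\textbf{Step 3: Assembling the first bound and deducing the second.} Taking expectations in Step 1, replacing the linear term by its bound from Step 2, absorbing the quadratic remainder on the left, and then taking the infimum over $J\in\mathcal S_m$ (which turns $\|J-J_0\|_{\langle M\rangle}^2$ into $\min_{J\in\mathcal S_m}\|J-J_0\|_{\langle M\rangle}^2\leqslant\mathbb E[\|\widehat J_{m,N}-J_0\|_{\langle M\rangle}^2]$ by Proposition \ref{risk_bound}) and over $m\in\mathcal M_N$, yields
\begin{displaymath}
\mathbb E[\|\widehat J_{\widehat m,N}-J_0\|_{\langle M\rangle}^{2}]
\leqslant
\mathfrak c_{\ref{risk_bound_adaptive_estimator}}\Big(
\min_{m\in\mathcal M_N}\{\mathbb E[\|\widehat J_{m,N}-J_0\|_{\langle M\rangle}^{2}]+{\rm pen}(m)\}
+\frac1N\Big),
\end{displaymath}
which is the first assertion. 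For the second assertion, under Assumption \ref{assumption_quadratic_variation_strong} I would reuse Inequality \eqref{risk_bound_weak_2} with $\widehat m$ in place of $m$, namely $\|\widehat J_{\widehat m,N}-J_0\|_2^2\leqslant\|\mu(.)^{-1}\|_{\infty,T}\|\widehat J_{\widehat m,N}-J_0\|_{\langle M\rangle}^2$, take expectations, apply the first assertion, and finally bound $\mathbb E[\|\widehat J_{m,N}-J_0\|_{\langle M\rangle}^2]\leqslant\|p_{\mathcal S_m(\mu)}^{\perp}(\mu^{1/2}J_0)-\mu^{1/2}J_0\|_2^2+2m/N$ by combining Proposition \ref{risk_bound} with Equality \eqref{risk_bound_weak_1}; since ${\rm pen}(m)=\mathfrak c_{\rm cal}m/N$, the terms $2m/N$ and ${\rm pen}(m)$ combine into $(2+\mathfrak c_{\rm cal})m/N$, giving exactly the stated inequality.
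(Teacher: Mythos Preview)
Your overall architecture (Steps 1 and 3) matches the paper's proof closely, and your derivation of the second inequality from the first via Inequality \eqref{risk_bound_weak_2}, Proposition \ref{risk_bound} and Equality \eqref{risk_bound_weak_1} is exactly what the paper intends. The minor difference that you compare $\gamma_N(\widehat J_{\widehat m,N})$ to $\gamma_N(J)$ for generic $J\in\mathcal S_m$ rather than to $\gamma_N(\widehat J_{m,N})$ is harmless.

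The genuine gap is in Step 2. You write that ``without a sub-Gaussian or boundedness assumption on $M$ one only has second moments'' and propose to control $\mathbb E[(W_{m'}-p(m,m'))_+]$ via Chebyshev or the variance of the quadratic form $\|\Psi_{m'}^{-1/2}e^{(m')}\|^2$. That will not produce a bound summable over $m'\in\mathcal M_N$ with total $O(1/N)$: polynomial tails would leave you with a factor of $|\mathcal M_N|$ (or worse) that can grow with $N$. The point you are missing is that a sub-Gaussian bound \emph{is} available here and is precisely Lemma \ref{Bernstein_type_inequality}: for a continuous square-integrable martingale, the exponential-martingale argument (Revuz--Yor, Ch.~IV, Ex.~3.16) gives
\[
\mathbb P\!\left[\nu_N(\varphi)\geqslant\varepsilon\right]\leqslant\exp\!\left(-\frac{N\varepsilon^2}{2\|\varphi\|_{\langle M\rangle}^2}\right).
\]
The paper then runs a metric-entropy chaining on the unit ball $\mathcal B_{m,m'}$ (covering by $\delta_n$-nets of cardinality at most $(3/\delta_n)^{m\vee m'}$, following Baraud--Comte--Viennet), applying Lemma \ref{Bernstein_type_inequality} at each scale. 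This yields the exponential bound
\[
\mathbb E\!\left[\Big(\sup_{\varphi\in\mathcal B_{m,m'}}|\nu_N(\varphi)|^2-p(m,m')\Big)_+\right]\leqslant\frac{C\,e^{-(m\vee m')}}{N},
\]
which is what makes the sum over $m'$ converge to $O(1/N)$. Without Lemma \ref{Bernstein_type_inequality} your Step 2 does not close; with it, the rest of your outline is correct.
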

\noindent
The proof of Theorem \ref{risk_bound_adaptive_estimator} relies on the following lemma, which is a straightforward consequence of a Bernstein type inequality for continuous local martingales vanishing at $0$ (see Revuz and Yor \cite{RY99}, Chapter IV, Exercice 3.16).
%

% Lemma : Bernstein-type inequality.

%
\begin{lemma}\label{Bernstein_type_inequality}
For every $\varepsilon > 0$ and every $\varphi\in\mathbb L^2([0,T],d\langle M\rangle_t)$,
\begin{displaymath}
\mathbb P\left[\frac{1}{N}
\sum_{i = 1}^{N}\int_{0}^{T}\varphi(s)dM_{s}^{i}\geqslant\varepsilon
\right]
\leqslant\exp\left(-\frac{N\varepsilon^2}{2\|\varphi\|_{\langle M\rangle}^{2}}\right).
\end{displaymath}
\end{lemma}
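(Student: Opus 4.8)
The plan is to recognise the (normalised) sum of stochastic integrals as a single continuous local martingale vanishing at $0$ and to apply the Bernstein-type inequality for such processes cited from Revuz and Yor \cite{RY99}, Chapter IV, Exercise 3.16, which states that for a continuous local martingale $N = (N_t)_{t\in [0,T]}$ with $N_0 = 0$ and for every $x,y > 0$,
\[
\mathbb P\left[\sup_{t\in [0,T]}N_t\geqslant x\ ,\ \langle N\rangle_T\leqslant y\right]
\leqslant\exp\left(-\frac{x^2}{2y}\right).
\]

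First I would set $N_t :=\frac{1}{N}\sum_{i = 1}^{N}\int_{0}^{t}\varphi(s)dM_{s}^{i}$ for $t\in [0,T]$ and check that $N$ is well-defined and is a continuous local martingale vanishing at $0$: since $\varphi\in\mathbb L^2([0,T],d\langle M\rangle_t)$ is deterministic, each $\int_{0}^{\cdot}\varphi(s)dM_{s}^{i}$ is a well-defined continuous local martingale null at $0$, and so is a finite linear combination of such processes. Next I would compute its bracket. Because $M^1,\dots,M^N$ are independent continuous martingales, $\langle M^i,M^j\rangle\equiv 0$ for $i\neq j$, whence
\[
\langle N\rangle_t =
\frac{1}{N^2}\sum_{i = 1}^{N}\int_{0}^{t}\varphi(s)^2d\langle M^i\rangle_s =
\frac{1}{N}\int_{0}^{t}\varphi(s)^2d\langle M\rangle_s,
\]
so that by Assumption \ref{assumption_quadratic_variation}, $\langle N\rangle_T =\frac{1}{N}\|\varphi\|_{\langle M\rangle}^{2}$ is a deterministic number.

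Finally, I would apply the quoted inequality with $x =\varepsilon$ and $y =\frac{1}{N}\|\varphi\|_{\langle M\rangle}^{2}$: the event $\{\langle N\rangle_T\leqslant y\}$ then has probability one, and since $\frac{1}{N}\sum_{i = 1}^{N}\int_{0}^{T}\varphi(s)dM_{s}^{i} = N_T\leqslant\sup_{t\in [0,T]}N_t$, we obtain
\[
\mathbb P\left[\frac{1}{N}\sum_{i = 1}^{N}\int_{0}^{T}\varphi(s)dM_{s}^{i}\geqslant\varepsilon\right]
\leqslant\exp\left(-\frac{N\varepsilon^2}{2\|\varphi\|_{\langle M\rangle}^{2}}\right),
\]
with the usual convention that the right-hand side equals $0$ when $\|\varphi\|_{\langle M\rangle} = 0$ (a case in which the left-hand probability vanishes as well, the integrals being a.s. zero). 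The only points needing a little care are the vanishing of the cross-brackets coming from the independence of the copies and the use of the deterministic character of $\langle M\rangle$ to turn $\{\langle N\rangle_T\leqslant y\}$ into a sure event; neither is a genuine obstacle, and the lemma is essentially an immediate specialisation of the cited Bernstein-type inequality.
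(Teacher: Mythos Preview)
Your proposal is correct and follows exactly the route the paper indicates: the paper does not spell out a proof but states that the lemma is a straightforward consequence of the Bernstein-type inequality for continuous local martingales vanishing at $0$ from Revuz and Yor \cite{RY99}, Chapter IV, Exercise 3.16, applied to the averaged stochastic integral whose bracket is deterministic under Assumption \ref{assumption_quadratic_variation}. The only detail worth noting is that the vanishing of the cross-brackets $\langle M^i,M^j\rangle$ for $i\neq j$ follows because, on the product space, the product of two independent continuous (local) martingales is again a (local) martingale; this is standard and does not affect the argument.
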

\noindent
Let us establish Theorem \ref{risk_bound_adaptive_estimator}.
%

% Proof.

%
\begin{proof}{\it of Theorem \ref{risk_bound_adaptive_estimator}.}
Let us proceed in three steps.
\\
\\
{\bf Step 1.} As established in the proof of Proposition \ref{risk_bound}, for every $J,K\in\mathcal S_m$,
\begin{displaymath}
\gamma_N(J) -\gamma_N(K) =
\|J - J_0\|_{\langle M\rangle}^{2} -\|K - J_0\|_{\langle M\rangle}^{2} -
\frac{2}{N}\sum_{i = 1}^{N}\int_{0}^{T}(J(s) - K(s))dM_{s}^{i}.
\end{displaymath}
Moreover,
\begin{displaymath}
\gamma_N(\widehat J_{\widehat m,N}) + {\rm pen}(\widehat m)
\leqslant
\gamma_N(\widehat J_{m,N}) + {\rm pen}(m)
\end{displaymath}
for any $m\in\mathcal M_N$, and then
\begin{displaymath}
\gamma_N(\widehat J_{\widehat m,N}) -\gamma_N(\widehat J_{m,N})
\leqslant
{\rm pen}(m) - {\rm pen}(\widehat m).
\end{displaymath}
So, since $\mathcal S_m +\mathcal S_{\widehat m}\subset\mathcal S_{m\vee\widehat m}$ under Assumption \ref{assumption_nested}, and since $2ab\leqslant a^2 + b^2$ for every $a,b\in\mathbb R$,
\begin{eqnarray*}
 \|\widehat J_{\widehat m,N} - J_0\|_{\langle M\rangle}^{2}
 & \leqslant &
 \|\widehat J_{m,N} - J_0\|_{\langle M\rangle}^{2}
 +\frac{2}{N}\sum_{i = 1}^{N}\int_{0}^{T}(\widehat J_{\widehat m,N}(s) -
 \widehat J_{m,N}(s))dM_{s}^{i} + {\rm pen}(m) - {\rm pen}(\widehat m)\\
 & \leqslant &
 \|\widehat J_{m,N} - J_0\|_{\langle M\rangle}^{2}
 + 2\cdot\frac{1}{2}\|\widehat J_{\widehat m,N} -\widehat J_{m,N}\|_{\langle M\rangle}
 \cdot 2\sup_{\varphi\in\mathcal B_{m,\widehat m}}|\nu_N(\varphi)|
 + {\rm pen}(m) - {\rm pen}(\widehat m)\\
 & \leqslant &
 \|\widehat J_{m,N} - J_0\|_{\langle M\rangle}^{2} +
 \frac{1}{4}\|\widehat J_{\widehat m,N} -\widehat J_{m,N}\|_{\langle M\rangle}^{2}\\
 & &
 \hspace{2cm} +
 4\left(\left[\sup_{\varphi\in\mathcal B_{m,\widehat m}}|\nu_N(\varphi)|\right]^2
 - p(m,\widehat m)\right)_+ +
 {\rm pen}(m) + 4p(m,\widehat m) - {\rm pen}(\widehat m),
\end{eqnarray*}
where
\begin{displaymath}
\mathcal B_{m,m'} :=
\{\varphi\in\mathcal S_{m\vee m'} :\|\varphi\|_{\langle M\rangle} = 1\}
\quad {\rm and}\quad
p(m,m') :=
\frac{\mathfrak c_{\rm cal}}{4}\cdot\frac{m\vee m'}{N}
\end{displaymath}
for every $m'\in\mathcal M_N$, and
\begin{displaymath}
\nu_N(\varphi) :=
\frac{1}{N}\sum_{i = 1}^{N}
\int_{0}^{T}\varphi(s)dM_{s}^{i}
\end{displaymath}
for every $\varphi\in\mathbb L^2([0,T],d\langle M\rangle_t)$. Therefore, since $(a + b)^2\leqslant 2a^2 + 2b^2$ for every $a,b\in\mathbb R$, and since $4p(m,\widehat m)\leqslant {\rm pen}(m) + {\rm pen}(\widehat m)$,
\begin{equation}\label{risk_bound_adaptive_estimator_1}
\|\widehat J_{\widehat m,N} - J_0\|_{\langle M\rangle}^{2}
\leqslant
3\|\widehat J_{m,N} - J_0\|_{\langle M\rangle}^{2} + 4{\rm pen}(m) +
8\left(\left[\sup_{\varphi\in\mathcal B_{m,\widehat m}}|\nu_N(\varphi)|\right]^2
- p(m,\widehat m)\right)_+.
\end{equation}
{\bf Step 2.} By using Lemma \ref{Bernstein_type_inequality}, and by following the pattern of the proof of Baraud et al. \cite{BCV07}, Proposition 6.1, the purpose of this step is to find a suitable bound on
\begin{displaymath}
\mathbb E\left[\left(
\left[\sup_{\varphi\in\mathcal B_{m,m'}}|\nu_N(\varphi)|\right]^2
- p(m,m')\right)_+\right]
\textrm{$;$ }
m'\in\mathcal M_N.
\end{displaymath}
Consider $\delta_0\in (0,1)$ and let $(\delta_n)_{n\in\mathbb N^*}$ be the real sequence defined by
\begin{displaymath}
\delta_n :=\delta_02^{-n}
\textrm{$;$ }
\forall n\in\mathbb N^*.
\end{displaymath}
Since $\mathcal S_{m\vee m'}$ is a vector subspace of $\mathbb L^2([0,T],d\langle M\rangle_t)$ of dimension $m\vee m'$, for any $n\in\mathbb N$, by Lorentz et al. \cite{LGM96}, Chapter 15, Proposition 1.3, there exists $T_n\subset\mathcal B_{m,m'}$ such that $|T_n|\leqslant (3/\delta_n)^{m\vee m'}$ and, for any $\varphi\in\mathcal B_{m,m'}$,
\begin{displaymath}
\exists f_n\in T_n :
\|\varphi - f_n\|_{\langle M\rangle}\leqslant\delta_n.
\end{displaymath}
In particular, note that
\begin{displaymath}
\varphi =
f_0 +\sum_{n = 1}^{\infty}(f_n - f_{n - 1}).
\end{displaymath}
Then, for any sequence $(\Delta_n)_{n\in\mathbb N}$ of elements of $(0,\infty)$ such that $\Delta :=\sum_{n\in\mathbb N}\Delta_n <\infty$,
\begin{eqnarray*}
 & & \mathbb P\left[\left(
 \sup_{\varphi\in\mathcal B_{m,m'}}
 |\nu_N(\varphi)|\right)^2 >
 \Delta^2\right]\\
 & &
 \hspace{2cm} =
 \mathbb P\left[\exists (f_n)_{n\in\mathbb N}\in\prod_{n = 0}^{\infty}T_n :
 |\nu_N(f_0)| +\sum_{n = 1}^{\infty}
 |\nu_N(f_n - f_{n - 1})| >\Delta\right]\\
 & &
 \hspace{2cm}\leqslant
 \mathbb P\left[\exists (f_n)_{n\in\mathbb N}\in\prod_{n = 0}^{\infty}T_n :
 |\nu_N(f_0)| >\Delta_0\textrm{ or }[\exists n\in\mathbb N^* :
 |\nu_N(f_n - f_{n - 1})| >\Delta_n]\right]\\
 & &
 \hspace{2cm}\leqslant
 \sum_{f_0\in T_0}\mathbb P[|\nu_N(f_0)| >\Delta_0] +
 \sum_{n = 1}^{\infty}\sum_{(f_{n - 1},f_n)\in\mathbb T_n}
 \mathbb P[|\nu_N(f_n - f_{n - 1})| >\Delta_n]
\end{eqnarray*}
with $\mathbb T_n = T_{n - 1}\times T_n$ for every $n\in\mathbb N^*$. Moreover, $\|f_0\|_{\langle M\rangle}^{2}\leqslant 1$ and
\begin{displaymath}
\|f_n - f_{n - 1}\|_{\langle M\rangle}^{2}
\leqslant
2\delta_{n - 1}^{2} + 2\delta_{n}^{2} =\frac{5}{2}\delta_{n - 1}^{2}
\end{displaymath}
for every $n\in\mathbb N^*$. So, by Lemma \ref{Bernstein_type_inequality},
\begin{eqnarray}
 \mathbb P\left[\left(
 \sup_{\varphi\in\mathcal B_{m,m'}}
 |\nu_N(\varphi)|\right)^2 >\Delta^2\right]
 & \leqslant &
 2\sum_{f_0\in T_0}\exp\left(
 -\frac{N\Delta_{0}^{2}}{2\|f_0\|_{\langle M\rangle}^{2}}\right)
 \nonumber\\
 & &
 \hspace{2cm} +
 2\sum_{n = 1}^{\infty}\sum_{(f_{n - 1},f_n)\in\mathbb T_n}
 \exp\left(
 -\frac{N\Delta_{n}^{2}}{2\|f_n - f_{n - 1}\|_{\langle M\rangle}^{2}}\right)
 \nonumber\\
 \label{risk_bound_adaptive_estimator_2}
 & \leqslant &
 2\exp\left(h_0 -\frac{N\Delta_{0}^{2}}{2}\right) +
 2\sum_{n = 1}^{\infty}\exp\left(h_{n - 1} + h_n -
 \frac{N\Delta_{n}^{2}}{5\delta_{n - 1}^{2}}\right)
\end{eqnarray}
with $h_n =\log(|T_n|)$ for every $n\in\mathbb N$. Now, let us take $\Delta_0$ such that
\begin{displaymath}
h_0 -\frac{N\Delta_{0}^{2}}{2} = -(m\vee m' + x)
\quad {\rm with}\quad x > 0,
\end{displaymath}
which leads to
\begin{displaymath}
\Delta_0 =
\left[\frac{2}{N}(m\vee m' + x + h_0)\right]^{1/2}\leqslant
\left[\frac{2}{N}(1 + h_0)(m\vee m' + x)\right]^{1/2},
\end{displaymath}
and for every $n\in\mathbb N^*$, let us take $\Delta_n$ such that
\begin{displaymath}
h_{n - 1} + h_n -\frac{N\Delta_{n}^{2}}{5\delta_{n - 1}^{2}} =
-(m\vee m' + x + n),
\end{displaymath}
which leads to
\begin{eqnarray*}
 \Delta_n & = &
 \left[\frac{5\delta_{n - 1}^{2}}{N}(m\vee m' + x + h_{n - 1} + h_n + n)\right]^{1/2}\\
 & \leqslant &
 \left[\frac{5\delta_{n - 1}^{2}}{N}(1 +\lambda_n + n)(m\vee m' + x)\right]^{1/2}
 \quad {\rm with}\quad
 \lambda_n = 2\log\left(\frac{3}{\delta_0}\right) + (2n - 1)\log(2).
\end{eqnarray*}
For this appropriate sequence $(\Delta_n)_{n\in\mathbb N}$,
\begin{displaymath}
\mathbb P\left[\left(
\sup_{\varphi\in\mathcal B_{m,m'}}
|\nu_N(\varphi)|\right)^2 >\Delta^2\right]
\leqslant
2e^{-x}e^{-(m\vee m')}\left(1 +\sum_{n = 1}^{\infty}e^{-n}\right)
\leqslant
3.2e^{-x}e^{-(m\vee m')}
\end{displaymath}
by Inequality \eqref{risk_bound_adaptive_estimator_2}, and
\begin{eqnarray*}
 \Delta^2 & \leqslant &
 \frac{1}{N}
 \left[\sqrt 2(1 + h_0)^{1/2}(m\vee m' + x)^{1/2} +
 \sqrt 5\sum_{n = 1}^{\infty}
 \delta_{n - 1}
 (1 +\lambda_n + n)^{1/2}(m\vee m' + x)^{1/2}\right]^2\\
 & \leqslant &
 \frac{\delta^2}{N}(m\vee m' + x)
\end{eqnarray*}
with
\begin{displaymath}
\delta =
\sqrt 2(1 + h_0)^{1/2} +
\sqrt 5\sum_{n = 1}^{\infty}
\delta_{n - 1}(1 +\lambda_n + n)^{1/2} <\infty.
\end{displaymath}
So,
\begin{displaymath}
\mathbb P\left[\left(
\sup_{\varphi\in\mathcal B_{m,m'}}
|\nu_N(\varphi)|\right)^2 -
\frac{\delta^2}{\mathfrak c_{\rm cal}}p(m,m') >
\frac{\delta^2}{N}x\right]
\leqslant
3.2e^{-x}e^{-(m\vee m')}
\end{displaymath}
and then, by taking $\mathfrak c_{\rm cal} > 4\delta^2$ and $y =\delta^2x/N$,
\begin{displaymath}
\mathbb P\left[\left(
\sup_{\varphi\in\mathcal B_{m,m'}}
|\nu_N(\varphi)|\right)^2 - p(m,m') > y\right]
\leqslant
3.2e^{-Ny/\delta^2}e^{-(m\vee m')}.
\end{displaymath}
Therefore,
\begin{eqnarray}
 \mathbb E\left[\left(\left[
 \sup_{\varphi\in\mathcal B_{m,m'}}
 |\nu_N(\varphi)|\right]^2 -
 p(m,m')\right)_+\right]
 & = &
 \int_{0}^{\infty}\mathbb P\left[\left(
 \sup_{\varphi\in\mathcal B_{m,m'}}
 |\nu_N(\varphi)|\right)^2 - p(m,m') > y\right]dy
 \nonumber\\
 \label{risk_bound_adaptive_estimator_3}
 & \leqslant &
 3.2\delta^2\frac{e^{-(m\vee m')}}{N}.
\end{eqnarray}
{\bf Step 3.} By Inequality \eqref{risk_bound_adaptive_estimator_3}, there exists a deterministic constant $\mathfrak c_1 > 0$, not depending on $m$ and $N$, such that
\begin{eqnarray*}
 \mathbb E\left[\left(\left[
 \sup_{\varphi\in\mathcal B_{m,\widehat m}}|\nu_N(\varphi)|\right]^2
 - p(m,\widehat m)\right)_+\right]
 & \leqslant &
 \sum_{m'\in\mathcal M_N}
 \mathbb E\left[\left(\left[
 \sup_{\varphi\in\mathcal B_{m,m'}}
 |\nu_N(\varphi)|\right]^2 -
 p(m,m')\right)_+\right]\\
 & \leqslant &
 \frac{3.2\delta^2}{N}\sum_{m'\in\mathcal M_N}e^{-(m\vee m')}\leqslant
 \frac{3.2\delta^2}{N}\left(me^{-m} +\sum_{m' > m}e^{-m'}\right)
 \leqslant\frac{\mathfrak c_1}{N}.
\end{eqnarray*}
Therefore, by Inequality \eqref{risk_bound_adaptive_estimator_1},
\begin{displaymath}
\mathbb E[\|\widehat J_{\widehat m,N} - J_0\|_{\langle M\rangle}^{2}]
\leqslant
\min_{m\in\mathcal M_N}\{
3\mathbb E[\|\widehat J_{m,N} - J_0\|_{\langle M\rangle}^{2}] + 4{\rm pen}(m)\} +
\frac{8\mathfrak c_1}{N}.
\quad\qed
\end{displaymath}
\end{proof}
\noindent
As in the usual nonparametric regression framework, since ${\rm pen}(m)$ is of same order than the bound on the variance term of $\widehat J_{m,N}$ for every $m\in\mathcal M_N$, Theorem \ref{risk_bound_adaptive_estimator} says that the risk of our adaptive estimator is controlled by the minimal risk of $\widehat J_{.,N}$ on $\mathcal M_N$ up to a multiplicative constant not depending on $N$.
\\
\\
Finally, the following proposition provides a risk bound on the discrete-time estimator $\widehat J_{m,N,n}$.
%

% Proposition : Risk bound on the discrete-time estimator.

%
\begin{proposition}\label{risk_bound_discrete}
Under Assumption \ref{assumption_quadratic_variation_strong}, there exists a deterministic constant $\mathfrak c_{\ref{risk_bound_discrete}} > 0$, not depending on $m$, $N$ and $n$, such that
\begin{displaymath}
\mathbb E[\|\widehat J_{m,N,n} - J_0\|_{\langle M\rangle}^{2}]
\leqslant
2\min_{J\in\mathcal S_m}\|J - J_0\|_{\langle M\rangle}^{2} +
\mathfrak c_{\ref{risk_bound_discrete}}\left(
\frac{m}{N} +\frac{mR(m)}{n^2}\right),
\end{displaymath}
where
\begin{displaymath}
R(m) :=\sup_{t\in [0,T]}\sum_{j = 1}^{m}\varphi_j'(t)^2.
\end{displaymath}
\end{proposition}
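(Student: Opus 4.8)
The plan is to compare the discrete-time estimator $\widehat J_{m,N,n}$ with the continuous-time estimator $\widehat J_{m,N}$ by controlling the discretization error in the linear term, and then to invoke Proposition \ref{risk_bound} for the continuous part. Concretely, both estimators solve a linear system with the same matrix $\Psi_m$, so $\widehat\theta_{m,N,n}-\widehat\theta_{m,N}=\Psi_m^{-1}(z_{m,N,n}-z_{m,N})$, hence in the $\langle M\rangle$-norm one has $\|\widehat J_{m,N,n}-\widehat J_{m,N}\|_{\langle M\rangle}^2=(z_{m,N,n}-z_{m,N})^*\Psi_m^{-1}(z_{m,N,n}-z_{m,N})$. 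Using the elementary bound $(a+b)^2\leqslant 2a^2+2b^2$ applied to $\|\widehat J_{m,N,n}-J_0\|_{\langle M\rangle}^2$ split as $\|\widehat J_{m,N,n}-\widehat J_{m,N}\|_{\langle M\rangle}^2+\|\widehat J_{m,N}-J_0\|_{\langle M\rangle}^2$ (with the factor $2$ absorbing the cross term), and then taking expectations, reduces the problem to bounding $\mathbb E[(z_{m,N,n}-z_{m,N})^*\Psi_m^{-1}(z_{m,N,n}-z_{m,N})]$ by something of order $m/N + mR(m)/n^2$. Note the factor $2$ in front of the bias term in the statement comes precisely from this inequality, and the $m/N$ term is already available from Proposition \ref{risk_bound}.

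For the discretization term, the $j$-th coordinate of $z_{m,N,n}-z_{m,N}$ is $\tfrac1N\sum_{i=1}^N\sum_{l=0}^{n-1}\int_{t_l}^{t_{l+1}}(\varphi_j(t_l)-\varphi_j(s))\,dZ_s^i$. Splitting $dZ_s^i=J_0(s)d\langle M\rangle_s+dM_s^i$ gives a deterministic drift-type remainder plus a centered martingale remainder. For the drift part, on each subinterval $|\varphi_j(t_l)-\varphi_j(s)|\leqslant \|\varphi_j'\|_{\infty,T}(T/n)$ (or more precisely $\int_{t_l}^s\varphi_j'$), so that, after using $\Psi_m^{-1}$ has operator norm bounded (under Assumption \ref{assumption_quadratic_variation_strong}, $\Psi_m\geqslant \|\mu(.)^{-1}\|_{\infty,T}^{-1}I_m$, since the $\varphi_j$ are orthonormal in $\mathbb L^2(dt)$), one gets a contribution of order $\frac1{n^2}\sum_{j=1}^m\|\varphi_j'\|_{\infty,T}^2$; controlling this sum by $m R(m)/n^2$ via $\sum_j\|\varphi_j'\|_{\infty,T}^2\leqslant m\sup_t\sum_j\varphi_j'(t)^2=mR(m)$ is where the quantity $R(m)$ enters. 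For the martingale part, the increments over disjoint subintervals are orthogonal in $\mathbb L^2$, so the expected squared norm of this piece is $\tfrac1N\sum_{l}\int_{t_l}^{t_{l+1}}(\varphi_j(t_l)-\varphi_j(s))^2\mu(s)\,ds$, again of order $\frac1{Nn^2}\|\mu\|_{\infty,T}\sum_j\|\varphi_j'\|_{\infty,T}^2$, which is dominated by $mR(m)/n^2$ (and in fact smaller by the factor $1/N$). Assembling these with the operator-norm bound on $\Psi_m^{-1}$ yields the claimed $\mathfrak c_{\ref{risk_bound_discrete}}(m/N+mR(m)/n^2)$ remainder.

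The main obstacle I expect is bookkeeping the cross terms correctly: the quadratic form $(z_{m,N,n}-z_{m,N})^*\Psi_m^{-1}(z_{m,N,n}-z_{m,N})$ mixes the deterministic drift remainder and the martingale remainder, so one must either handle the full vector at once (bounding $\|z_{m,N,n}-z_{m,N}\|_{2,m}^2$ and multiplying by $\|\Psi_m^{-1}\|_{\rm op}$) or expand and control a cross term by Cauchy--Schwarz; the cleanest route is the former, since $\|\Psi_m^{-1}\|_{\rm op}\leqslant\|\mu(.)^{-1}\|_{\infty,T}$ is uniform in $m$. A secondary point requiring care is that the bound must be fully uniform in $m$, $N$, $n$, so all constants introduced (from $\|\mu\|_{\infty,T}$, $\|\mu(.)^{-1}\|_{\infty,T}$, $T$) must be checked not to depend on those three parameters — which they do not, under Assumption \ref{assumption_quadratic_variation_strong}. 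Everything else is a routine telescoping-plus-Jensen computation.
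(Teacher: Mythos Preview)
Your plan is essentially the paper's own argument: the same triangle-inequality split $\|\widehat J_{m,N,n}-J_0\|_{\langle M\rangle}^2\leqslant 2\|\widehat J_{m,N}-J_0\|_{\langle M\rangle}^2+2\|\widehat J_{m,N,n}-\widehat J_{m,N}\|_{\langle M\rangle}^2$, the same appeal to Proposition~\ref{risk_bound} for the first piece, and the same drift/martingale decomposition of $z_{m,N,n}-z_{m,N}$ for the second. The only organizational difference is that the paper writes $\|\widehat J_{m,N,n}-\widehat J_{m,N}\|_{\langle M\rangle}^2=\int_0^T\langle\Psi_m^{-1}(z_{m,N}-z_{m,N,n}),\varphi(t)\rangle_{2,m}^2\,d\langle M\rangle_t$ and uses the trace identity $\int_0^T\|\Psi_m^{-1}\varphi(t)\|_{2,m}^2\,d\langle M\rangle_t={\rm trace}(\Psi_m^{-1})\leqslant m\|\mu(.)^{-1}\|_{\infty,T}$ (Lemma~\ref{technical_lemma_discrete}) together with the vector bound $\|\varphi(s)-\varphi(t_l)\|_{2,m}^2\leqslant R(m)(s-t_l)^2$, whereas you bound the quadratic form by $\|\Psi_m^{-1}\|_{\rm op}\|z_{m,N,n}-z_{m,N}\|_{2,m}^2$ and work coordinate-wise, recovering the factor $m$ via $\sum_j\|\varphi_j'\|_{\infty,T}^2\leqslant mR(m)$. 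Both routes land on $mR(m)/n^2$ with the same constants in spirit.

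One point to correct: you list $\|\mu\|_{\infty,T}$ among the harmless constants, but Assumption~\ref{assumption_quadratic_variation_strong} does \emph{not} guarantee $\mu$ is bounded --- only that $\mu(.)^{-1}$ is continuous on $[0,T]$ (for the Molchan martingale, $\mu(t)=(2-2H)t^{1-2H}$ blows up at $0$ when $H>1/2$). So your martingale-part estimate ``of order $\tfrac1{Nn^2}\|\mu\|_{\infty,T}\sum_j\|\varphi_j'\|_{\infty,T}^2$'' is not justified as written. The fix is immediate: bound $\sum_l\int_{t_l}^{t_{l+1}}(\varphi_j(t_l)-\varphi_j(s))^2\mu(s)\,ds\leqslant\|\varphi_j'\|_{\infty,T}^2(T/n)^2\sum_l\int_{t_l}^{t_{l+1}}\mu(s)\,ds=\|\varphi_j'\|_{\infty,T}^2(T/n)^2\langle M\rangle_T$, i.e.\ replace $\|\mu\|_{\infty,T}$ by the finite quantity $\langle M\rangle_T$. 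The paper avoids this slip by never leaving the measure $d\langle M\rangle$. With that correction your proposal goes through.
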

\noindent
The proof of Proposition \ref{risk_bound_discrete} relies on the following technical lemma.
%

% Lemma : Technical lemma for the risk bound on the discrete-time estimator.

%
\begin{lemma}\label{technical_lemma_discrete}
Under Assumptions \ref{assumption_quadratic_variation_strong},
\begin{displaymath}
\int_{0}^{T}\|\Psi_{m}^{-1}\varphi(t)\|_{2,m}^{2}d\langle M\rangle_t =
{\rm trace}(\Psi_{m}^{-1})\leqslant\|\mu(.)^{-1}\|_{\infty,T}m
\end{displaymath}
with $\varphi = (\varphi_1,\dots,\varphi_m)$.
\end{lemma}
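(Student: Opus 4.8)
The plan is to prove the equality and the inequality separately. For the equality, I would write out the integral coordinate-by-coordinate. With $\varphi(t) = (\varphi_1(t),\dots,\varphi_m(t))^*$, we have
$\|\Psi_m^{-1}\varphi(t)\|_{2,m}^2 = \varphi(t)^*\Psi_m^{-1}\Psi_m^{-1}\varphi(t) = \varphi(t)^*\Psi_m^{-2}\varphi(t) = \mathrm{trace}(\Psi_m^{-2}\varphi(t)\varphi(t)^*)$.
Integrating against $d\langle M\rangle_t$ and using linearity of the trace and the fact that $\Psi_m^{-2}$ does not depend on $t$, we get
$\int_0^T \|\Psi_m^{-1}\varphi(t)\|_{2,m}^2\, d\langle M\rangle_t = \mathrm{trace}\bigl(\Psi_m^{-2}\int_0^T \varphi(t)\varphi(t)^*\, d\langle M\rangle_t\bigr)$.
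The key observation is that $\int_0^T \varphi(t)\varphi(t)^*\, d\langle M\rangle_t$ is precisely the matrix $\Psi_m$, by definition of $\Psi_m = (\int_0^T \varphi_j\varphi_k\, d\langle M\rangle)_{j,k}$. Hence the integral equals $\mathrm{trace}(\Psi_m^{-2}\Psi_m) = \mathrm{trace}(\Psi_m^{-1})$, which gives the stated equality.

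For the inequality, I would bound $\mathrm{trace}(\Psi_m^{-1}) \leqslant m\,\|\Psi_m^{-1}\|_{\mathrm{op}} = m/\lambda_{\min}(\Psi_m)$, where $\lambda_{\min}(\Psi_m)$ is the smallest eigenvalue. So it suffices to show $\lambda_{\min}(\Psi_m) \geqslant \|\mu(.)^{-1}\|_{\infty,T}^{-1}$. Under Assumption \ref{assumption_quadratic_variation_strong}, for any $u \in \mathbb{R}^m$,
$u^*\Psi_m u = \int_0^T\bigl(\sum_j u_j\varphi_j(s)\bigr)^2\mu(s)\,ds \geqslant \|\mu(.)^{-1}\|_{\infty,T}^{-1}\int_0^T\bigl(\sum_j u_j\varphi_j(s)\bigr)^2 ds = \|\mu(.)^{-1}\|_{\infty,T}^{-1}\|u\|_{2,m}^2$,
where the inequality uses $\mu(s) \geqslant \|\mu(.)^{-1}\|_{\infty,T}^{-1}$ pointwise (since $\mu(s)^{-1} \leqslant \|\mu(.)^{-1}\|_{\infty,T}$) and the last equality uses that $(\varphi_1,\dots,\varphi_m)$ is orthonormal in $\mathbb{L}^2([0,T],dt)$. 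This shows $\lambda_{\min}(\Psi_m) \geqslant \|\mu(.)^{-1}\|_{\infty,T}^{-1}$, hence $\mathrm{trace}(\Psi_m^{-1}) \leqslant \|\mu(.)^{-1}\|_{\infty,T}\, m$, completing the proof.

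The only mildly delicate point is the identification $\int_0^T \varphi(t)\varphi(t)^*\, d\langle M\rangle_t = \Psi_m$, which is immediate from the definitions but should be stated cleanly; everything else is a routine eigenvalue estimate. I do not anticipate a genuine obstacle here — the lemma is essentially bookkeeping combined with the orthonormality of the basis and the continuity (hence boundedness on $[0,T]$) of $\mu(.)^{-1}$ granted by Assumption \ref{assumption_quadratic_variation_strong}.
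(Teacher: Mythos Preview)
Your proposal is correct and follows essentially the same route as the paper. The paper expands the squared norm coordinate-by-coordinate rather than via the trace identity, and bounds $\|\Psi_m^{-1}\|_{\mathrm{op}}$ through the variational formula $\sup_{\|J\|_{\langle M\rangle}=1}\|J\|_2^2$ rather than the equivalent lower bound on $\lambda_{\min}(\Psi_m)$, but these are purely cosmetic differences; both arguments hinge on the identification $\int_0^T\varphi\varphi^*\,d\langle M\rangle=\Psi_m$ and the orthonormality of $(\varphi_1,\dots,\varphi_m)$ in $\mathbb L^2([0,T],dt)$.
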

%

% Proof.

%
\begin{proof}
Since $\Psi_m$ is a symmetric matrix,
\begin{eqnarray*}
 \int_{0}^{T}\|\Psi_{m}^{-1}\varphi(t)\|_{2,m}^{2}d\langle M\rangle_t
 & = &
 \sum_{j = 1}^{m}\int_{0}^{T}
 \left(\sum_{k = 1}^{m}[\Psi_{m}^{-1}]_{j,k}\varphi_k(t)\right)^2d\langle M\rangle_t\\
 & = &
 \sum_{j,k,k' = 1}^{m}[\Psi_{m}^{-1}]_{j,k}[\Psi_{m}^{-1}]_{j,k'}
 \int_{0}^{T}\varphi_k(t)\varphi_{k'}(t)d\langle M\rangle_t\\
 & = &
 \sum_{j,k = 1}^{m}[\Psi_{m}^{-1}]_{j,k}
 \sum_{k' = 1}^{m}[\Psi_{m}^{-1}]_{j,k'}[\Psi_m]_{k',k}\\
 & = &
 \sum_{j,k = 1}^{m}[\Psi_{m}^{-1}]_{j,k}I_{j,k} =
 {\rm trace}(\Psi_{m}^{-1})
\end{eqnarray*}
and
\begin{eqnarray*}
 \|\Psi_{m}^{-1}\|_{\rm op} & = &
 \sup_{\|\theta\|_{2,m} = 1}\theta^*\Psi_{m}^{-1}\theta =
 \sup_{\|\theta\|_{2,m} = 1}\|\Psi_{m}^{-1/2}\theta\|_{2,m}^{2} =
 \sup_{\|\Psi_{m}^{1/2}\theta\|_{2,m} = 1}\|\theta\|_{2,m}^{2}\\
 & = &
 \sup_{J\in\mathcal S_m :\|J\|_{\langle M\rangle} = 1}\|J\|_{2}^{2} =
 \sup_{J\in\mathcal S_m :\|J\|_{\langle M\rangle} = 1}
 \int_{0}^{T}J(s)^2\mu(s)^{-1}d\langle M\rangle_s
 \leqslant\|\mu(.)^{-1}\|_{\infty,T}.
\end{eqnarray*}
Therefore,
\begin{displaymath}
\int_{0}^{T}\|\Psi_{m}^{-1}\varphi(t)\|_{2,m}^{2}d\langle M\rangle_t =
{\rm trace}(\Psi_{m}^{-1})
\leqslant m\|\Psi_{m}^{-1}\|_{\rm op}
\leqslant m\|\mu(.)^{-1}\|_{\infty,T}.
\quad\qed
\end{displaymath}
\end{proof}
\noindent
Let us establish Proposition \ref{risk_bound_discrete}.
%

% Proof.

%
\begin{proof}{\it of Proposition \ref{risk_bound_discrete}.}
First of all, note that
\begin{eqnarray*}
 \mathbb E[\|\widehat J_{m,N,n} - J_0\|_{\langle M\rangle}^{2}]
 & \leqslant &
 2\mathbb E[\|\widehat J_{m,N} - J_0\|_{\langle M\rangle}^{2}] +
 2\mathbb E[\|\widehat J_{m,N} -\widehat J_{m,N,n}\|_{\langle M\rangle}^{2}]\\
 & \leqslant &
 2\left(\min_{J\in\mathcal S_m}\|J - J_0\|_{\langle M\rangle}^{2} +
 \frac{2m}{N} +\Delta_{m,N,n}\right),
\end{eqnarray*}
where
\begin{displaymath}
\Delta_{m,N,n} :=
\int_{0}^{T}\mathbb E[\langle\Psi_{m}^{-1}(z_{m,N} - z_{m,N,n}),\varphi(t)\rangle_{2,m}^{2}]d\langle M\rangle_t.
\end{displaymath}
Since $Z^1,\dots,Z^N$ are independent copies of $Z$, and since $\Psi_{m}^{-1}$ is a symmetric matrix,
\begin{eqnarray*}
 \Delta_{m,N,n} & = &
 \frac{1}{N^2}\int_{0}^{T}\mathbb E\left[\left|
 \sum_{i = 1}^{N}\sum_{l = 0}^{n - 1}\int_{t_l}^{t_{l + 1}}
 \langle\Psi_{m}^{-1}(\varphi(s) -\varphi(t_l)),
 \varphi(t)\rangle_{2,m}dZ_{s}^{i}\right|^2\right]d\langle M\rangle_t\\
 & \leqslant &
 2\int_{0}^{T}\left(
 \sum_{l = 0}^{n - 1}
 \int_{t_l}^{t_{l + 1}}\langle\varphi(s) -\varphi(t_l),
 \Psi_{m}^{-1}\varphi(t)\rangle_{2,m}J_0(s)
 d\langle M\rangle_s\right)^2d\langle M\rangle_t\\
 & &
 \hspace{1cm}
 + 2\int_{0}^{T}\mathbb E\left[\left|
 \sum_{l = 0}^{n - 1}
 \int_{t_l}^{t_{l + 1}}\langle\varphi(s) -\varphi(t_l),
 \Psi_{m}^{-1}\varphi(t)\rangle_{2,m}dM_s\right|^2\right]d\langle M\rangle_t\\
 & =: &
 2A_{m,n} + 2B_{m,n}.
\end{eqnarray*}
Now, let us find suitable bounds on $A_{m,n}$ and $B_{m,n}$:
\begin{itemize}
 \item {\bf Bound on $A_{m,n}$.} By Cauchy-Schwarz's inequality and Lemma \ref{technical_lemma_discrete},
 \begin{eqnarray*}
  A_{m,n} & \leqslant &
  \left(
  \int_{0}^{T}\|\Psi_{m}^{-1}\varphi(t)\|_{2,m}^{2}d\langle M\rangle_t\right)
  \left(\sum_{l = 0}^{n - 1}\int_{t_l}^{t_{l + 1}}
  \|\varphi(s) -\varphi(t_l)\|_{2,m}|J_0(s)|
  d\langle M\rangle_s\right)^2\\
  & \leqslant &
  {\rm trace}(\Psi_{m}^{-1})\|\varphi'\|_{\infty,T}^{2}
  \left(\sum_{l = 0}^{n - 1}\int_{t_l}^{t_{l + 1}}(s - t_l)|J_0(s)|d\langle M\rangle_s\right)^2\\
  & \leqslant &
  {\rm trace}(\Psi_{m}^{-1})R(m)
  \frac{T^2}{n^2}
  \left(\sum_{l = 0}^{n - 1}\int_{t_l}^{t_{l + 1}}|J_0(s)|d\langle M\rangle_s\right)^2 =
  \|\mu(.)^{-1}\|_{\infty,T}\|J_0\|_{\langle M\rangle}^{2}\langle M\rangle_TT^2
  \frac{mR(m)}{n^2}.
 \end{eqnarray*}
 \item {\bf Bound on $B_{m,n}$.} By the isometry property of It\^o's integral, Cauchy-Schwarz's inequality and Lemma \ref{technical_lemma_discrete},
 \begin{eqnarray*}
  B_{m,n} & = &
  \int_{0}^{T}\mathbb E\left[\left|\int_{0}^{T}\left(
  \sum_{l = 0}^{n - 1}\langle\varphi(s) -\varphi(t_l),
  \Psi_{m}^{-1}\varphi(t)\rangle_{2,m}\mathbf 1_{[t_l,t_{l + 1}]}(s)
  \right)dM_s\right|^2\right]d\langle M\rangle_t\\
  & = &
  \int_{0}^{T}\int_{0}^{T}\left(
  \sum_{l = 0}^{n - 1}\langle\varphi(s) -\varphi(t_l),
  \Psi_{m}^{-1}\varphi(t)\rangle_{2,m}^{2}\mathbf 1_{[t_l,t_{l + 1}]}(s)
  \right)d\langle M\rangle_sd\langle M\rangle_t\\
  & \leqslant &
  \left(\int_{0}^{T}\|\Psi_{m}^{-1}\varphi(t)\|_{2,m}^{2}d\langle M\rangle_t\right)
  \left(
  \sum_{l = 0}^{n - 1}\int_{t_l}^{t_{l + 1}}
  \|\varphi(s) -\varphi(t_l)\|_{2,m}^{2}d\langle M\rangle_s\right)\\
  & \leqslant &
  {\rm trace}(\Psi_{m}^{-1})
  \|\varphi'\|_{\infty,T}^{2}\sum_{l = 0}^{n - 1}
  \int_{t_l}^{t_{l + 1}}(s - t_l)^2d\langle M\rangle_s\\
  & \leqslant &
  {\rm trace}(\Psi_{m}^{-1})R(m)
  \frac{T^2}{n^2}
  \sum_{l = 0}^{n - 1}\int_{t_l}^{t_{l + 1}}d\langle M\rangle_s
  \leqslant
  \|\mu(.)^{-1}\|_{\infty,T}\langle M\rangle_TT^2\frac{mR(m)}{n^2}.
 \end{eqnarray*}
\end{itemize}
In conclusion, there exists a deterministic constant $\mathfrak c_1 > 0$, not depending on $m$, $N$ and $n$, such that
\begin{displaymath}
\mathbb E[\|\widehat J_{m,N,n} - J_0\|_{\langle M\rangle}^{2}]
\leqslant
2\min_{J\in\mathcal S_m}\|J - J_0\|_{\langle M\rangle}^{2} +
\mathfrak c_1\left(
\frac{m}{N} +\frac{mR(m)}{n^2}\right).
\quad\qed
\end{displaymath}
\end{proof}
\noindent
For instance, assume that $(\varphi_1,\dots,\varphi_m)$ is the trigonometric basis. As established in Comte and Marie \cite{CM21}, Subsubsection 3.2.1, $R(m)$ is of order $m^3$. Then, for $n$ of order $N^2$, the variance term in the risk bound of Proposition \ref{risk_bound_discrete} is of order $m/N$ as in the risk bound on the continuous-time estimator of Proposition \ref{risk_bound}.
%

% Section : Application to differential equations driven by the fractional Brownian motion.

%
\section{Application to differential equations driven by the fractional Brownian motion}\label{section_application_fDE}
Throughout this section, $M$ is the Molchan martingale defined at Section \ref{projection_LS_estimator_section}. For $H = 1/2$, we assume that $V :\mathbb R\rightarrow\mathbb R$ is continuously differentiable, $V'$ is bounded, $\sigma : [0,T]\rightarrow\mathbb R\backslash\{0\}$ and $b_0 : [0,T]\rightarrow\mathbb R$ are continuous, and then Equation \eqref{main_fDE} has a unique solution (see Revuz and Yor \cite{RY99}, Chapter IX, Theorem 2.1). For $H\in (1/2,1)$, we assume that $V :\mathbb R\rightarrow\mathbb R$ is twice continuously differentiable, $V'$ and $V''$ are bounded, $\sigma : [0,T]\rightarrow\mathbb R\backslash\{0\}$ is $\gamma$-H\"older continuous with $\gamma\in (1 - H,1]$, $b_0 : [0,T]\rightarrow\mathbb R$ is continuous, and then Equation \eqref{main_fDE} has a unique solution which paths are $\alpha$-H\"older continuous from $[0,T]$ into $\mathbb R$ for every $\alpha\in (1/2,H)$ (see Kubilius et al. \cite{KMR17}, Theorem 1.42). In the sequel, the maps $V$ and $\sigma$ are known and our purpose is to provide a nonparametric estimator of $b_0$.
%

% Subsection : Auxiliary model.

%
\subsection{Auxiliary model}\label{subsection_auxiliary_model}
The model transformation used in the sequel has been introduced in Kleptsyna and Le Breton \cite{KL01} in the parametric estimation framework. Let $Q_0 : [0,T]\rightarrow\mathbb R$ be the map defined by
\begin{displaymath}
Q_0(t) :=\frac{b_0(t)}{\sigma(t)}
\textrm{$;$ }
\forall t\in [0,T],
\end{displaymath}
and assume that $Q_0\in\mathcal Q := C^1([0,T];\mathbb R)$. Consider also the process $Z$ such that for every $t\in [0,T]$,
\begin{displaymath}
Z_t :=
\int_{0}^{t}\ell(t,s)dY_s
\end{displaymath}
with
\begin{displaymath}
Y_t =\int_{0}^{t}\frac{dX_s}{V(X_s)\sigma(s)} =
\int_{0}^{t}\left(\frac{b_0(s)}{\sigma(s)}ds + dB_s\right) =
\int_{0}^{t}Q_0(s)ds + B_t.
\end{displaymath}
Then, Equation \eqref{main_fDE} leads to
\begin{eqnarray}
 Z_t & = & j(Q_0)(t) + M_t
 \nonumber\\
 \label{main_fDE_rewritten}
 & = &
 \int_{0}^{t}J(Q_0)(s)d\langle M\rangle_s + M_t,
\end{eqnarray}
where
\begin{displaymath}
j(Q)(t) :=
\int_{0}^{t}\ell(t,s)Q(s)ds
\quad {\rm and}\quad
J(Q)(t) :=
(2 - 2H)^{-1}t^{2H - 1}j(Q)'(t)
\end{displaymath}
for any $Q\in\mathcal Q$ and every $t\in (0,T]$. About the existence of $j(Q)'(t)$ for every $t\in (0,T]$, see Kubilius et al. \cite{KMR17}, Lemma 5.8.
%

% Subsection : An estimator of Q_0.

%
\subsection{An estimator of $Q_0$}\label{subsection_estimator_Q_0}
For $H = 1/2$, $\langle M\rangle_t =\langle B\rangle_t = t$ for every $t\in [0,T]$, and $J(Q) = Q$ for every $Q\in\mathcal Q$. Then, in Model \eqref{main_fDE_rewritten}, for any $m\in\{1,\dots,N\}$, the solution to Problem \eqref{estimator} is a nonparametric estimator of $Q_0$. So, no additional investigations are required when $H = 1/2$. For $H\in (1/2,1)$, in Model \eqref{main_fDE_rewritten}, the solution $\widehat J_{m,N}$ to Problem \eqref{estimator} is a nonparametric estimator of $J(Q_0)$. So, for $H\in (1/2,1)$, this subsection deals with an estimator of $Q_0$ defined from $\widehat J_{m,N}$.
\\
\\
Let us consider the function space
\begin{displaymath}
\mathcal J :=
\left\{\iota :\textrm{the function }
t\in [0,T]\longmapsto\int_{0}^{t}s^{1 - 2H}\iota(s)ds
\textrm{ belongs to }\mathcal I_{0+}^{3/2 - H}(\mathbb L^1([0,T],dt))
\right\},
\end{displaymath}
where $\mathcal I_{0+}^{3/2 - H}(.)$ is the Riemann-Liouville left-sided fractional integral of order $3/2 - H$. The reader can refer to Samko et al. \cite{SKM93}, Chapter 1, Section 2 on fractional calculus.
\\
\\
In order to define our estimator of $Q_0$, let us establish first the following technical proposition.
%

% Proposition : Inverse of the map J.

%
\begin{proposition}\label{inverse_J}
The map $J : Q\mapsto J(Q)$ satisfies the two following properties:
\begin{enumerate}
 \item $J(\mathcal Q)\subset\mathcal J$.
 \item $\overline J(J(Q)) = Q$ for every $Q\in\mathcal Q$, where $\overline J$ is the map defined on $\mathcal J$ by
 \begin{displaymath}
 \overline J(\iota)(t) :=
 \overline{\mathfrak c}_Ht^{H - 1/2}
 \int_{0}^{t}(t - s)^{H - 3/2}s^{1 - 2H}\iota(s)ds
 \end{displaymath}
 for every $\iota\in\mathcal J$ and $t\in [0,T]$, where
 \begin{displaymath}
 \overline{\mathfrak c}_H :=
 \frac{2 - 2H}{\mathfrak c_H\Gamma(3/2 - H)\Gamma(H - 1/2)}.
 \end{displaymath}
\end{enumerate}
\end{proposition}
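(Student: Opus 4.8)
The plan is to prove the two assertions by reducing everything to the semigroup property of Riemann--Liouville fractional integrals. The key observation is that the kernel $\ell(t,s)=\mathfrak c_H s^{1/2-H}(t-s)^{1/2-H}\mathbf 1_{(0,t)}(s)$ makes $j(Q)(t)=\mathfrak c_H\Gamma(3/2-H)\,t^{1/2-H}\,\mathcal I_{0+}^{3/2-H}\!\big(s\mapsto s^{1/2-H}Q(s)\big)(t)$ up to the constant coming from $\int_0^t(t-s)^{1/2-H}s^{1/2-H}\cdots\,ds$ being a fractional integral of order $3/2-H$ of the function $g_Q(s):=s^{1/2-H}Q(s)$. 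So the first step is to write $j(Q)$ explicitly in this form, differentiate (the derivative exists by Kubilius et al.\ \cite{KMR17}, Lemma 5.8, as already noted), and thereby express $J(Q)(t)=(2-2H)^{-1}t^{2H-1}j(Q)'(t)$ in terms of $g_Q$ and fractional operators.

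For assertion (1), I would check directly that $t\mapsto\int_0^t s^{1-2H}J(Q)(s)\,ds$ lies in $\mathcal I_{0+}^{3/2-H}(\mathbb L^1)$. Starting from the explicit formula for $j(Q)$ above, the factor $s^{1-2H}$ cancels the $s^{1/2-H}$'s appropriately and, after integrating $J(Q)$ against $s^{1-2H}$, one should recover $\mathcal I_{0+}^{3/2-H}$ applied to something integrable (essentially $Q$ itself, rescaled); smoothness of $Q\in C^1$ and the elementary integrability of the power weights on $[0,T]$ give membership in $\mathbb L^1$, hence the composed function is in the claimed image space. This is essentially a bookkeeping step once the fractional-integral representation of $j(Q)$ is in hand.

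For assertion (2), the strategy is to verify that $\overline J$ is the (left) inverse of $J$ on $\mathcal Q$ by composing the two fractional operators and using $\mathcal I_{0+}^{\alpha}\mathcal I_{0+}^{\beta}=\mathcal I_{0+}^{\alpha+\beta}$ together with $\mathcal I_{0+}^{1}=$ integration and the fact that $\frac{d}{dt}\mathcal I_{0+}^{1}=\mathrm{id}$. Concretely: $j(Q)$ is (a power weight times) $\mathcal I_{0+}^{3/2-H}g_Q$; multiplying by $t^{2H-1}$ and differentiating turns the order $3/2-H$ into $1/2-H$ up to the weight juggling, so $J(Q)$ is (a power weight times) $\mathcal I_{0+}^{1/2-H}g_Q$; then $\overline J$, which is manifestly (a power weight times) $\mathcal I_{0+}^{H-1/2}$ applied to $s^{1-2H}\iota(s)$, composes to give $\mathcal I_{0+}^{H-1/2}\mathcal I_{0+}^{1/2-H}=\mathcal I_{0+}^{0}=\mathrm{id}$ after the weights collapse, returning $Q$. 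The constant $\overline{\mathfrak c}_H=\tfrac{2-2H}{\mathfrak c_H\Gamma(3/2-H)\Gamma(H-1/2)}$ is exactly what is needed so that $\Gamma(3/2-H)\Gamma(H-1/2)$ from the two fractional integrals and $\mathfrak c_H$, $(2-2H)$ from the definitions of $J$ and $\ell$ all cancel; I would confirm this by tracking constants through the beta-function identity $\int_0^1 u^{1/2-H}(1-u)^{H-3/2}\,du=B(3/2-H,H-1/2)$.

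The main obstacle is making the formal fractional-calculus manipulations rigorous: one must ensure the derivative $j(Q)'$ can legitimately be computed by differentiating under the fractional integral (justified via \cite{KMR17}, Lemma 5.8, and the $C^1$ regularity of $Q$), and that all the weight functions $s^{1/2-H}$, $s^{1-2H}$, $(t-s)^{H-3/2}$ are integrable against the relevant measures so that Fubini applies when reordering the iterated integrals — here $H\in(1/2,1)$ keeps $H-3/2\in(-1,-1/2)$, so the singularity is integrable, but the endpoint behaviour at $s=0$ needs the $C^1$ assumption on $Q$ to control $g_Q$. Once these integrability points are dispatched, the proof is a direct computation using the semigroup law.
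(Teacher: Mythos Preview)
Your approach via Riemann--Liouville fractional calculus is the paper's approach. One slip to fix: your displayed formula for $j(Q)$ carries a spurious factor $t^{1/2-H}$; the correct identity is simply
\[
j(Q)(t)=\mathfrak c_H\Gamma(3/2-H)\,\mathcal I_{0+}^{3/2-H}(g_Q)(t),
\qquad g_Q(s)=s^{1/2-H}Q(s),
\]
(the paper writes $Q_H$ for your $g_Q$). From there the paper's execution differs slightly from your ``compose $\mathcal I^{H-1/2}$ with $\mathcal I^{1/2-H}$'' sketch. Rather than viewing $J(Q)$ as a weight times $\mathcal D_{0+}^{H-1/2}g_Q$ and then invoking $\mathcal I_{0+}^{H-1/2}\mathcal D_{0+}^{H-1/2}=\mathrm{id}$ --- which would oblige you to verify that $g_Q\in\mathcal I_{0+}^{H-1/2}(\mathbb L^1)$ --- the paper first integrates $s^{1-2H}J(Q)(s)$ over $[0,t]$, using the boundary fact $\mathcal I_{0+}^{3/2-H}(g_Q)(0^+)=0$ (which it proves via a direct estimate), to obtain
\[
\int_0^t s^{1-2H}J(Q)(s)\,ds=\frac{\mathfrak c_H\Gamma(3/2-H)}{2-2H}\,\mathcal I_{0+}^{3/2-H}(g_Q)(t).
\]
This equality \emph{is} assertion (1). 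Then it applies $\mathcal D_{0+}^{3/2-H}$ to both sides: on the right one uses the unconditional identity $\mathcal D_{0+}^{\alpha}\mathcal I_{0+}^{\alpha}=\mathrm{id}$ on $\mathbb L^1$, and on the left the representation of $\mathcal D_{0+}^{3/2-H}$ for absolutely continuous functions (Kubilius et al.\ \cite{KMR17}, Proposition~1.10) reproduces exactly the integral defining $\overline J$. Your ordering works as well once the image condition on $g_Q$ is checked, but the paper's route sidesteps that extra verification and gives (1) for free along the way.
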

%

% Proof.

%
\begin{proof}
In the sequel, $\mathcal I_{0+}^{\alpha}(.)$ (resp. $\mathcal D_{0+}^{\alpha}(.)$) is the Riemann-Liouville left-sided fractional integral (resp. derivative) of order $\alpha\in (0,1)$. Consider $Q\in\mathcal Q$ and the function $Q_H : (0,T]\rightarrow\mathbb R$ defined by
\begin{displaymath}
Q_H(t) :=
t^{1/2 - H}Q(t)
\textrm{ $;$ }
\forall t\in (0,T].
\end{displaymath}
The function $\mathcal I_{0+}^{3/2 - H}(Q_H)$ is well-defined on $(0,T]$ and, for every $t\in (0,T]$,
\begin{eqnarray*}
 \mathcal I_{0+}^{3/2 - H}(Q_H)(t) & = &
 \frac{1}{\Gamma(3/2 - H)}\int_{0}^{t}(t - s)^{1/2 - H}Q_H(s)ds\\
 & = &
 \frac{1}{\mathfrak c_H\Gamma(3/2 - H)}\int_{0}^{t}\ell(t,s)Q(s)ds =
 \frac{1}{\mathfrak c_H\Gamma(3/2 - H)}j(Q)(t).
\end{eqnarray*}
Since $j(Q)'(t)$ exists for any $t\in (0,T]$ by Kubilius et al. \cite{KMR17}, Lemma 5.8 as mentioned above, the derivative of $\mathcal I_{0+}^{3/2 - H}(Q_H)$ at time $t$ also. Moreover, since $Q$ is continuous on $[0,T]$,
\begin{eqnarray*}
 |\mathcal I_{0+}^{3/2 - H}(Q_H)(t)| & \leqslant &
 \frac{1}{\Gamma(3/2 - H)}\int_{0}^{t}s^{1/2 - H}(t - s)^{1/2 - H}|Q(s)|ds\\
 & \leqslant &
 \frac{\|Q\|_{\infty,T}}{2\Gamma(3/2 - H)}
 \left[\int_{0}^{t}s^{1 - 2H}ds +
 \int_{0}^{t}(t - s)^{1 - 2H}ds\right]\\
 & = &
 \frac{\|Q\|_{\infty,T}}{(2 - 2H)\Gamma(3/2 - H)}t^{2 - 2H}
 \xrightarrow[t\rightarrow 0]{}0.
\end{eqnarray*}
By the definition of the map $J$, for every $s\in (0,T]$,
\begin{eqnarray*}
 J(Q)(s) & = &
 (2 - 2H)^{-1}s^{2H - 1}j(Q)'(s)\\
 & = &
 \frac{\mathfrak c_H\Gamma(3/2 - H)}{2 - 2H}
 s^{2H - 1}\frac{\partial}{\partial s}
 \mathcal I_{0+}^{3/2 - H}(Q_H)(s)
\end{eqnarray*}
and then, for every $t\in [0,T]$,
\begin{eqnarray}
 \int_{0}^{t}s^{1 - 2H}J(Q)(s)ds & = &
 \frac{\mathfrak c_H\Gamma(3/2 - H)}{2 - 2H}
 \lim_{\varepsilon\rightarrow 0}\int_{\varepsilon}^{t}
 \left[\frac{\partial}{\partial s}\mathcal I_{0+}^{3/2 - H}(Q_H)(s)\right]ds
 \nonumber\\
 & = &
 \frac{\mathfrak c_H\Gamma(3/2 - H)}{2 - 2H}
 \lim_{\varepsilon\rightarrow 0}[\mathcal I_{0+}^{3/2 - H}(Q_H)(t) -
 \mathcal I_{0+}^{3/2 - H}(Q_H)(\varepsilon)]
 \nonumber\\
 \label{inverse_J_1}
 & = &
 \frac{\mathfrak c_H\Gamma(3/2 - H)}{2 - 2H}
 \mathcal I_{0+}^{3/2 - H}(Q_H)(t).
\end{eqnarray}
So, $J(Q)\in\mathcal J$ by Equality \eqref{inverse_J_1}, and then $J(\mathcal Q)\subset\mathcal J$. By applying the Riemann-Liouville left-sided fractional derivative of order $3/2 - H$ on each side of Equality \eqref{inverse_J_1}, and thanks to its representation for absolutely continuous functions on $[0,T]$ (see Kubilius et al. \cite{KMR17}, Proposition 1.10), for every $t\in (0,T]$,
\begin{eqnarray*}
 Q_H(t) & = &
 \frac{2 - 2H}{\mathfrak c_H\Gamma(3/2 - H)}
 \mathcal D_{0+}^{3/2 - H}\left(\int_{0}^{.}
 s^{1 - 2H}J(Q)(s)ds\right)(t)\\
 & = &
 \overline{\mathfrak c}_H
 \int_{0}^{t}(t - s)^{H - 3/2}s^{1 - 2H}J(Q)(s)ds.
\end{eqnarray*}
Therefore, for every $t\in [0,T]$,
\begin{displaymath}
Q(t) =
\overline{\mathfrak c}_Ht^{H - 1/2}
\int_{0}^{t}(t - s)^{H - 3/2}s^{1 - 2H}J(Q)(s)ds =
\overline J(J(Q))(t).
\quad\qed
\end{displaymath}
\end{proof}
\noindent
For $\varphi_1,\dots,\varphi_m\in\mathcal J$, Proposition \ref{inverse_J} legitimates to consider the following estimator of $Q_0$:
\begin{eqnarray*}
 \widehat Q_{m,N}(t) & := &
 \overline J(\widehat J_{m,N})(t)\\
 & = &
 \overline{\mathfrak c}_Ht^{H - 1/2}
 \int_{0}^{t}(t - s)^{H - 3/2}s^{1 - 2H}\widehat J_{m,N}(s)ds
 \textrm{$;$ }
 t\in [0,T].
\end{eqnarray*}
The following proposition provides risk bounds on $\widehat Q_{m,N}$, $m\in\{1,\dots,N\}$, and on the adaptive estimator $\widehat Q_{\widehat m,N}$.
%

% Proposition : Risk bound on the estimator of Q_0.

%
\begin{proposition}\label{risk_bound_estimator_Q_0}
Assume that $J(Q_0)\in\mathbb L^2([0,T],d\langle M\rangle_t)$. If the $\varphi_j$'s belong to $\mathcal J$, then there exists a deterministic constant $\mathfrak c_{\ref{risk_bound_estimator_Q_0},1} > 0$, not depending on $N$, such that
\begin{displaymath}
\mathbb E[\|\widehat Q_{m,N} - Q_0\|_{2}^{2}]
\leqslant
\mathfrak c_{\ref{risk_bound_estimator_Q_0},1}
\left(\min_{\iota\in\mathcal S_m}
\|\iota - J(Q_0)\|_{\langle M\rangle}^{2} +\frac{m}{N}\right)
\textrm{$;$ }\forall m\in\{1,\dots,N\}.
\end{displaymath}
If in addition the $\varphi_j$'s fulfill Assumption \ref{assumption_nested}, then there exists a deterministic constant $\mathfrak c_{\ref{risk_bound_estimator_Q_0},2} > 0$, not depending on $N$, such that
\begin{displaymath}
\mathbb E[\|\widehat Q_{\widehat m,N} - Q_0\|_{2}^{2}]
\leqslant
\mathfrak c_{\ref{risk_bound_estimator_Q_0},2}\left(
\min_{m\in\mathcal M_N}\left\{\min_{\iota\in\mathcal S_m}
\|\iota - J(Q_0)\|_{\langle M\rangle}^{2}
+\frac{m}{N}\right\} +\frac{1}{N}\right).
\end{displaymath}
\end{proposition}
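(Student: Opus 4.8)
The key observation is that $\widehat Q_{m,N} = \overline J(\widehat J_{m,N})$ and, by Proposition \ref{inverse_J}, $Q_0 = \overline J(J(Q_0))$, so that $\widehat Q_{m,N} - Q_0 = \overline J(\widehat J_{m,N} - J(Q_0))$ since $\overline J$ is linear. Hence the plan is to show that $\overline J$ is a bounded linear operator from $(\mathbb L^2([0,T],d\langle M\rangle_t), \|\cdot\|_{\langle M\rangle})$ into $(\mathbb L^2([0,T],dt), \|\cdot\|_2)$, i.e. that there exists a deterministic constant $\mathfrak c > 0$ such that
\begin{equation}\label{plan_boundedness}
\|\overline J(\iota)\|_{2}^{2} \leqslant \mathfrak c\,\|\iota\|_{\langle M\rangle}^{2}
\qquad\text{for all }\iota\in\mathcal J\cap\mathbb L^2([0,T],d\langle M\rangle_t).
\end{equation}
Once \eqref{plan_boundedness} is available, applying it to $\iota = \widehat J_{m,N} - J(Q_0)$ and taking expectations gives
$\mathbb E[\|\widehat Q_{m,N} - Q_0\|_{2}^{2}] \leqslant \mathfrak c\, \mathbb E[\|\widehat J_{m,N} - J(Q_0)\|_{\langle M\rangle}^{2}]$, and the first bound follows directly from Proposition \ref{risk_bound} (with $J_0$ replaced by $J(Q_0)$, which is licit since $J(Q_0)\in\mathbb L^2([0,T],d\langle M\rangle_t)$ by hypothesis and since the $\varphi_j$'s belong to $\mathcal J$ so that $\widehat J_{m,N}\in\mathcal J$ and $\widehat Q_{m,N}$ is well-defined). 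The second bound follows the same way from the first inequality of Theorem \ref{risk_bound_adaptive_estimator}: apply \eqref{plan_boundedness} to $\widehat J_{\widehat m,N} - J(Q_0)$, take expectations, then bound $\mathbb E[\|\widehat J_{m,N}-J(Q_0)\|_{\langle M\rangle}^2]$ via Proposition \ref{risk_bound} inside the minimum over $m\in\mathcal M_N$, and absorb the constants $2$ and $\mathfrak c_{\mathrm{cal}}$ appearing in $\mathrm{pen}(m)$ into $\mathfrak c_{\ref{risk_bound_estimator_Q_0},2}$.

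So the entire proof reduces to establishing the boundedness estimate \eqref{plan_boundedness}. Here $\langle M\rangle_t = t^{2-2H}$, equivalently $\mu(t) = (2-2H)t^{1-2H}$, so $\|\iota\|_{\langle M\rangle}^2 = (2-2H)\int_0^T \iota(s)^2 s^{1-2H}\,ds$; writing $g(s) := s^{(1-2H)/2}\iota(s)$ we have $\|\iota\|_{\langle M\rangle}^2 \asymp \|g\|_2^2$ and
\begin{displaymath}
\overline J(\iota)(t) = \overline{\mathfrak c}_H\, t^{H-1/2}\int_0^t (t-s)^{H-3/2} s^{(1-2H)/2} g(s)\,ds .
\end{displaymath}
Recognizing the integral operator $g\mapsto \int_0^t (t-s)^{H-3/2}s^{(1-2H)/2}g(s)\,ds$ as essentially a (weighted) Riemann--Liouville fractional integral of order $H-1/2\in(0,1/2)$, I would invoke the boundedness of such operators on $\mathbb L^2$. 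The cleanest route is a direct Cauchy--Schwarz / Schur-test style argument: writing the kernel as $K(t,s) = \overline{\mathfrak c}_H\, t^{H-1/2}(t-s)^{H-3/2}s^{(1-2H)/2}\mathbf 1_{(0,t)}(s)$, one shows $\sup_{t\in[0,T]}\int_0^T |K(t,s)|\,w(s)\,ds \leqslant C\, w_*(t)$ and symmetrically $\sup_{s}\int |K(t,s)|\,dt \leqslant C'$ for a suitable weight, whence $\|\overline J(\iota)\|_2^2 = \|K g\|_2^2 \leqslant C C' \|g\|_2^2 \leqslant \mathfrak c\,\|\iota\|_{\langle M\rangle}^2$. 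The relevant integrals $\int_0^t (t-s)^{H-3/2}s^{a}\,ds$ are Beta integrals, finite precisely because $H-3/2 > -1$ (true for $H<1/2$... but here $H>1/2$, so $H-3/2\in(-1,-1/2)$, still $>-1$) and the power of $s$ exceeds $-1$; a careful bookkeeping of the exponents of $t$ shows the $t$-powers combine to give exactly an $\mathbb L^2([0,T])$-bounded operator.

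The main obstacle is precisely the convergence and uniform control of these singular Beta-type integrals near $s=0$ and $s=t$: the kernel has the non-integrable-looking factor $(t-s)^{H-3/2}$ with $H-3/2<-1/2$, so one cannot simply bound $\int_0^t|K(t,s)|\,ds$ by a constant — the fractional-integration structure must be used, i.e. the $\mathbb L^2\to\mathbb L^2$ (in fact $\mathbb L^2\to\mathbb L^q$ for $q>2$) continuity of $\mathcal I_{0+}^{H-1/2}$ on a bounded interval, combined with the fact that multiplication by the bounded weights $t^{H-1/2}$ and $s^{(1-2H)/2}$ (the latter being bounded on $[0,T]$ only because $1-2H<0$ is multiplied... wait, $s^{(1-2H)/2}\to\infty$ as $s\to 0$ when $H>1/2$, so this weight is NOT bounded) is controlled. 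Thus the delicate point is that the weight $s^{(1-2H)/2}$ blows up at the origin, and its singularity must be exactly compensated inside the fractional integral; I would handle this by absorbing $s^{(1-2H)/2}$ into $g$ differently — keeping $\iota$ itself and noting $\int_0^t(t-s)^{H-3/2}s^{1-2H}\iota(s)\,ds$ with the $\mathbb L^2(s^{1-2H}ds)$ norm of $\iota$ — and then applying a weighted Hardy--Littlewood--Sobolev / weighted fractional-integral inequality (e.g. from Samko--Kilbas--Marichev \cite{SKM93}) with the power weight $s^{1-2H}$, checking that the Muckenhoupt-type admissibility condition on the exponents $H-1/2$ and $1-2H$ holds for all $H\in(1/2,1)$. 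If a black-box weighted inequality is unavailable in the cited references, the fallback is the explicit Schur test with the correct weight $w(t) = t^{\lambda}$, choosing $\lambda$ so that both iterated integrals are finite Beta integrals — a short but exponent-sensitive computation.
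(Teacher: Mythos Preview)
Your reduction is exactly the paper's: linearity of $\overline J$ plus Proposition \ref{inverse_J} gives $\widehat Q_{m,N}-Q_0=\overline J(\widehat J_{m,N}-J(Q_0))$, so everything comes down to the operator bound \eqref{plan_boundedness}, after which Proposition \ref{risk_bound} and Theorem \ref{risk_bound_adaptive_estimator} finish the job. The paper's proof of \eqref{plan_boundedness} is precisely your ``fallback'' option, executed in the most elementary way: apply Cauchy--Schwarz to the inner integral with the symmetric splitting of the kernel,
\[
\left(\int_0^t (t-s)^{H-3/2}s^{1-2H}\iota(s)\,ds\right)^2
\leqslant \theta(t)\int_0^t (t-s)^{H-3/2}s^{1-2H}\iota(s)^2\,ds,
\qquad
\theta(t):=\int_0^t (t-s)^{H-3/2}s^{1-2H}\,ds,
\]
observe that $\theta(t)$ is a Beta integral equal to $B(H-\tfrac12,2-2H)\,t^{1/2-H}$, so that $t^{2H-1}\theta(t)$ is bounded on $(0,T]$; then Fubini turns the remaining double integral into $\int_0^T s^{1-2H}\iota(s)^2\int_s^T(t-s)^{H-3/2}\,dt\,ds$, and the inner $t$-integral is $\leqslant (H-\tfrac12)^{-1}T^{H-1/2}$. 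The factor $s^{1-2H}$ left over is exactly the density of $d\langle M\rangle_s$, so no weighted Hardy--Littlewood--Sobolev or Muckenhoupt machinery is needed --- your worry about the blow-up of $s^{(1-2H)/2}$ dissolves once you keep the kernel intact rather than peel off that weight.
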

%

% Proof.

%
\begin{proof}
Consider $\iota\in\mathcal J\cap\mathbb L^2([0,T],d\langle M\rangle_t)$. By Cauchy-Schwarz's inequality,
\begin{eqnarray*}
 \|\overline J(\iota)\|_{2}^{2} & = &
 \overline{\mathfrak c}_{H}^{2}
 \int_{0}^{T}t^{2H - 1}\left(
 \int_{0}^{t}(t - s)^{H - 3/2}s^{1 - 2H}\iota(s)ds
 \right)^2dt\\
 & \leqslant &
 \overline{\mathfrak c}_{H}^{2}
 \int_{0}^{T}t^{2H - 1}\theta(t)
 \int_{0}^{t}(t - s)^{H - 3/2}s^{1 - 2H}\iota(s)^2dsdt
\end{eqnarray*}
with
\begin{displaymath}
\theta(t) :=
\int_{0}^{t}(t - s)^{H - 3/2}s^{1 - 2H}ds
\textrm{$;$ }
\forall t\in (0,T].
\end{displaymath}
Note that $\overline\theta : t\mapsto t^{2H - 1}\theta(t)$ is bounded on $(0,T]$. Indeed, for every $t\in (0,T]$,
\begin{eqnarray*}
 |\overline\theta(t)| & \leqslant &
 t^{2H - 1}\left[\left(\frac{t}{2}\right)^{H - 3/2}\int_{0}^{t/2}s^{1 - 2H}ds +
 \left(\frac{t}{2}\right)^{1 - 2H}\int_{t/2}^{t}(t - s)^{H - 3/2}ds
 \right]\\
 & = &
 t^{2H - 1}\left[\frac{1}{2 - 2H}\left(\frac{t}{2}\right)^{H - 3/2}
 \left(\frac{t}{2}\right)^{2 - 2H} +
 \frac{1}{H - 1/2}\left(\frac{t}{2}\right)^{1 - 2H}\left(\frac{t}{2}\right)^{H - 1/2}
 \right]\\
 & = &
 \frac{1}{2^{1/2 - H}}
 \left(\frac{1}{2 - 2H} +\frac{1}{H - 1/2}\right)
 t^{H - 1/2}\xrightarrow[t\rightarrow 0^+]{} 0.
\end{eqnarray*}
So, by Fubini's theorem,
\begin{eqnarray*}
 \|\overline J(\iota)\|_{2}^{2}
 & \leqslant &
 \overline{\mathfrak c}_{H}^{2}\|\overline\theta\|_{\infty,T}
 \int_{0}^{T}\int_{0}^{T}
 (t - s)^{H - 3/2}s^{1 - 2H}\iota(s)^2\mathbf 1_{(s,T]}(t)dsdt\\
 & = &
 \overline{\mathfrak c}_{H}^{2}\|\overline\theta\|_{\infty,T}
 \int_{0}^{T}s^{1 - 2H}\iota(s)^2\int_{s}^{T}
 (t - s)^{H - 3/2}dtds\\
 & \leqslant &
 (H - 1/2)^{-1}\overline{\mathfrak c}_{H}^{2}T^{H - 1/2}\|\overline\theta\|_{\infty,T}
 \int_{0}^{T}s^{1 - 2H}\iota(s)^2ds.
\end{eqnarray*}
Then, $\overline J(\iota)\in\mathbb L^2([0,T],dt)$ and
\begin{equation}\label{risk_bound_estimator_Q_0_1}
\|\overline J(\iota)\|_{2}^{2}
\leqslant
\mathfrak c_1\|\iota\|_{\langle M\rangle}^{2}
\quad {\rm with}\quad
\mathfrak c_1 = (H - 1/2)^{-1}(2 - 2H)^{-1}
\overline{\mathfrak c}_{H}^{2}T^{H - 1/2}\|\overline\theta\|_{\infty,T}.
\end{equation}
Therefore, by the definition of the estimator $\widehat Q_{m,N}$, by Proposition \ref{inverse_J} and by Inequality \eqref{risk_bound_estimator_Q_0_1},
\begin{displaymath}
\|\widehat Q_{m,N} - Q_0\|_{2}^{2} =
\|\overline J(\widehat J_{m,N}) -\overline J(J(Q_0))\|_{2}^{2} =
\|\overline J(\widehat J_{m,N} - J(Q_0))\|_{2}^{2}\leqslant
\mathfrak c_1\|\widehat J_{m,N} - J(Q_0)\|_{\langle M\rangle}^{2}.
\end{displaymath}
The conclusion comes from Proposition \ref{risk_bound} and Theorem \ref{risk_bound_adaptive_estimator}.\qed
\end{proof}
\noindent
Proposition \ref{risk_bound_estimator_Q_0} says that the MISE (Mean Integrated Squared Error) of $\widehat Q_{m,N}$, $m\in\{1,\dots,N\}$, (resp. $\widehat Q_{\widehat m,N}$) has at most the same bound than the MISE of $\widehat J_{m,N}$ (resp. $\widehat J_{\widehat m,N}$).
%

% Subsection : Example 1: drift estimation in a non-autonomous Black-Scholes model.

%
\subsection{Example 1: drift estimation in a non-autonomous Black-Scholes model}\label{subsection_Black_Scholes}
Let us consider a financial market model in which the prices process $S = (S_t)_{t\in\mathbb R_+}$ of the risky asset satisfies
\begin{equation}\label{Black_Scholes_model}
S_t = S_0 +
\int_{0}^{t}S_u(b_0(u)du +\sigma dW_u)
\textrm{$;$ }t\in\mathbb R_+,
\end{equation}
where $S_0$ is a $(0,\infty)$-valued random variable, $W = (W_t)_{t\in\mathbb R_+}$ is a Brownian motion, $\sigma > 0$ and $b_0\in C^0(\mathbb R_+;\mathbb R)$. This is a non-autonomous extension of the Black-Scholes model.
\\
\\
Note that, in practice, several independent copies of the prices process $S$ cannot be observed on $[0,T]$. So, in order to define a suitable estimator of $b_0$ on $[0,T]$, let us assume that $S$ is observed on $[0,NT]$ and, for any $i\in\{1,\dots,N\}$, consider $T_i := (i - 1)T$ and the process $S^i = (S_{t}^{i})_{t\in [0,T]}$ defined by
\begin{displaymath}
S_{t}^{i} := S_{T_i + t}
\textrm{$;$ }\forall t\in [0,T].
\end{displaymath}
By Equation \eqref{Black_Scholes_model}, for every $t\in [0,T]$,
\begin{eqnarray*}
 S_{t}^{i} & = &
 S_{T_i} +\int_{T_i}^{T_i + t}S_u(b_0(u)du +\sigma dW_u)\\
 & = &
 S_{0}^{i} +\int_{0}^{t}S_{u}^{i}(b_0(T_i + u)du +\sigma dW_{u}^{i})
 \quad {\rm with}\quad
 W^i := W_{T_i +\cdot} - W_{T_i}.
\end{eqnarray*}
Moreover, let $Z^i = (Z_{t}^{i})_{t\in [0,T]}$ be the process defined by
\begin{equation}\label{process_Z_Black_Scholes}
Z_{t}^{i}
:=\frac{1}{\sigma}\int_{0}^{t}\frac{dS_{u}^{i}}{S_{u}^{i}}
=\frac{1}{\sigma}\int_{0}^{t}b_0(T_i + u)d\langle W^i\rangle_u + W_{t}^{i}
\end{equation}
for every $t\in [0,T]$. Since $W^1,\dots,W^N$ are $N$ independent Brownian motions, by assuming that the volatility constant $\sigma$ is known and that $b_0$ is $T$-periodic, a suitable nonparametric estimator of $b_0$ on $[0,T]$ is given by
\begin{displaymath}
\widehat b_{m,N}(t) :=
\sigma\widehat J_{m,N}(t)
\textrm{$;$ }
t\in [0,T],
\end{displaymath}
where $m\in\{1,\dots,N\}$ and
\begin{displaymath}
\widehat J_{m,N} =\arg\min_{J\in\mathcal S_m}\left\{
\frac{1}{N}\sum_{i = 1}^{N}\left(
\int_{0}^{T}J(s)^2ds -
2\int_{0}^{T}J(s)dZ_{s}^{i}\right)\right\}.
\end{displaymath}
Since
\begin{displaymath}
\|\widehat b_{m,N} - b_0\|_{2}^{2} =
\sigma^2\|\widehat J_{m,N} - J_0\|_{2}^{2}
\quad {\rm with}\quad
J_0 =\frac{b_0}{\sigma},
\end{displaymath}
Proposition \ref{risk_bound} provides a risk bound on $\widehat b_{m,N}$, and Theorem \ref{risk_bound_adaptive_estimator} provides a risk bound on $\widehat b_{\widehat m,N}$ with $\widehat m$ selected in $\mathcal M_N\subset\{1,\dots,N\}$ via \eqref{model_selection_method}.
\\
\\
Finally, to assume that $b_0$ is $T$-periodic means that Model \eqref{Black_Scholes_model} is appropriate for assets with a prices process having {\it similar} trends on each interval $[T_i,T_{i + 1}]$, typically each day ($T = 24$h). Obviously, since constant functions are $T$-periodic, $\widehat b_{m,N}$ is an estimator of the drift constant in the usual Black-Scholes model.
%

% Subsection : Example 2: nonparametric estimation in a non-autonomous fractional stochastic volatility model.

%
\subsection{Example 2: nonparametric estimation in a non-autonomous fractional stochastic volatility model}\label{subsection_fractional_stochastic_volatility}
Let us consider a financial market model in which the prices process $S = (S_t)_{t\in [0,T]}$ of the risky asset satisfies Equation \eqref{fractional_stochastic_volatility_model}, that is
\begin{displaymath}
\left\{
\begin{array}{rcl}
 dS_t & = & S_t(b(t)dt +\sigma_tdW_t)\\
 d\sigma_t & = & \sigma_t(\rho_0(t)dt +\upsilon dB_t)
\end{array}\right.,
\end{displaymath}
where $S_0$ and $\sigma_0$ are $(0,\infty)$-valued random variables, $W = (W_t)_{t\in\mathbb R_+}$ (resp. $B = (B_t)_{t\in\mathbb R_+}$) is a Brownian motion (resp. a fractional Brownian motion of Hurst parameter $H\in [1/2,1)$), $W$ and $B$ are independent, $\upsilon > 0$ and $b,\rho_0\in C^0(\mathbb R_+;\mathbb R)$.
\\
\\
Here again, in practice, it is not possible to get several independent copies of the volatility process $\sigma$ on $[0,T]$. So, in order to define a suitable estimator of $\rho_0$ on $[0,T]$, let us assume that $(S,\sigma)$ is observed on $[0,N(T +\Delta)]$ with $\Delta\in\mathbb R_+$, and for any $i\in\{1,\dots,N\}$, consider $T_i(\Delta) := (i - 1)(T +\Delta)$ and the process $\sigma^i = (\sigma_{t}^{i})_{t\in [0,T]}$ defined by
\begin{displaymath}
\sigma_{t}^{i} :=\sigma_{T_i(\Delta) + t}
\textrm{$;$ }\forall t\in [0,T].
\end{displaymath}
By Equation \eqref{fractional_stochastic_volatility_model}, for every $t\in [0,T]$,
\begin{eqnarray*}
 \sigma_{t}^{i} & = &
 \sigma_{T_i(\Delta)} +
 \int_{T_i(\Delta)}^{T_i(\Delta) + t}\sigma_s(\rho_0(s)ds +\upsilon dB_s)\\
 & = &
 \sigma_{0}^{i} +\int_{0}^{t}\sigma_{s}^{i}
 (\rho_0(T_i(\Delta) + s)ds +\upsilon dB_{s}^{i})
 \quad {\rm with}\quad
 B^i := B_{T_i(\Delta) +\cdot} - B_{T_i(\Delta)}.
\end{eqnarray*}
Moreover, let $Z^i = (Z_{t}^{i})_{t\in [0,T]}$ be the process such that, for every $t\in [0,T]$,
\begin{eqnarray*}
 Z_{t}^{i} & := &
 \frac{\mathfrak c_H}{\upsilon}\int_{0}^{t}
 \frac{s^{1/2 - H}(t - s)^{1/2 - H}}{\sigma_{s}^{i}}d\sigma_{s}^{i}\\
 & = &
 \frac{1}{\upsilon}\int_{0}^{t}
 \ell(t,s)\rho_0(T_i(\Delta) + s)ds + M_{t}^{i}
 \quad {\rm with}\quad
 M^i :=
 \int_{0}^{.}\ell(.,s)dB_{s}^{i}.
\end{eqnarray*}
In the sequel, $\rho_0$ is $(T +\Delta)$-periodic, and then
\begin{displaymath}
Z_{t}^{i} =\frac{1}{\upsilon}\int_{0}^{t}J(\rho_0)(s)d\langle M^i\rangle_s + M_{t}^{i}
\textrm{$;$ }\forall t\in [0,T].
\end{displaymath}
Since $B$ has stationary increments, $M^1,\dots,M^N$ have the same distribution, but these Molchan martingales are not independent when $H > 1/2$. However, for any $i,k\in\{1,\dots,N\}$ such that $i < k$, and any $s,t\in [0,T]$ such that $s < t$,
\begin{eqnarray*}
 \mathbb E[B_{s}^{i}B_{t}^{k}] & = &
 \mathbb E[B_s(B_{t + T_{i,k}(\Delta)} - B_{T_{i,k}(\Delta)})]
 \quad {\rm with}\quad T_{i,k}(\Delta) = T_k(\Delta) - T_i(\Delta)\\
 & = &
 1/2[s^{2H} + (t + T_{i,k}(\Delta))^{2H} - (t + T_{i,k}(\Delta) - s)^{2H}
 - (s^{2H} + T_{i,k}(\Delta)^{2H} - (T_{i,k}(\Delta) - s)^{2H})]\\
 & = &
 1/2[(T_{i,k}(\Delta) + t)^{2H} + (T_{i,k}(\Delta) - s)^{2H}
 - (T_{i,k}(\Delta) + t - s)^{2H} - T_{i,k}(\Delta)^{2H}]\\
 & = &
 1/2T_{i,k}(\Delta)^{2H}
 [(1 + t/T_{i,k}(\Delta))^{2H} + (1 - s/T_{i,k}(\Delta))^{2H} -
 (1 + (t - s)/T_{i,k}(\Delta))^{2H} - 1]\\
 & = &
 2^{-1}H(2H - 1)T_{i,k}(\Delta)^{2H}[
 (t/T_{i,k}(\Delta))^2 + (s/T_{i,k}(\Delta))^2\\
 & &
 \hspace{4.5cm} - ((t - s)/T_{i,k}(\Delta))^2
 +\textrm o((1/T_{i,k}(\Delta))^2)]
 \quad {\rm when}\quad\Delta\rightarrow\infty\\
 & &
 \hspace{1cm}\sim_{\Delta\rightarrow\infty}
 H(2H - 1)\cdot st\cdot(k - i)^{2H - 2}(T +\Delta)^{2H - 2}.
\end{eqnarray*}
Since $(T +\Delta)^{2H - 2}\rightarrow 0$ when $\Delta\rightarrow\infty$, the more $\Delta$ is large, the more $B^i$ and $B^k$ (and then $M^i$ and $M^k$) {\it become} independent. So, for $\Delta$ large enough, if the constant $\upsilon$ is known, thanks to Subsection \ref{subsection_estimator_Q_0}, a {\it satisfactory} nonparametric estimator of $\rho_0$ is given by
\begin{displaymath}
\widehat\rho_{m,N}(t) :=
\upsilon\widehat Q_{m,N}(t)
\textrm{$;$ }
t\in [0,T],
\end{displaymath}
where $m\in\{1,\dots,N\}$,
\begin{displaymath}
\widehat Q_{m,N}(t) :=
\left\{
\begin{array}{lcl}
 \widehat J_{m,N}(t) & {\rm if} & H = 1/2\\
 \displaystyle{\overline{\mathfrak c}_Ht^{H - 1/2}
 \int_{0}^{t}(t - s)^{H - 3/2}s^{1 - 2H}\widehat J_{m,N}(s)ds}
 & {\rm if} & H > 1/2
\end{array}\right.
\textrm{$;$ }
t\in [0,T]
\end{displaymath}
and
\begin{displaymath}
\widehat J_{m,N} =\arg\min_{J\in\mathcal S_m}\left\{
\frac{1}{N}\sum_{i = 1}^{N}\left(
(2 - 2H)\int_{0}^{T}J(s)^2s^{1 - 2H}ds -
2\int_{0}^{T}J(s)dZ_{s}^{i}\right)\right\}.
\end{displaymath}
Assume that the $\varphi_j$'s belong to $\mathcal J$. Since
\begin{displaymath}
\|\widehat\rho_{m,N} -\rho_0\|_{2}^{2} =
\upsilon^2\|\widehat Q_{m,N} - Q_0\|_{2}^{2}
\quad {\rm with}\quad
Q_0 =\frac{\rho_0}{\upsilon},
\end{displaymath}
if $M^1,\dots,M^N$ were independent, then Proposition \ref{risk_bound_estimator_Q_0} would provide risk bounds on $\widehat\rho_{m,N}$ and on the adaptive estimator $\widehat\rho_{\widehat m,N}$ with $\widehat m$ selected in $\mathcal M_N$ via \eqref{model_selection_method}. Of course $M^1,\dots,M^N$ are not independent when $H > 1/2$, but the risk bounds of Proposition \ref{risk_bound_estimator_Q_0} remain relevant for $\Delta$ large enough as explained above.
\\
\\
Finally, the $(T +\Delta)$-periodicity condition on $\rho_0$ makes sense in the following special case; when $\rho_0$ is $T$-periodic and $\Delta =\delta T$ with $\delta\in\mathbb N^*$ large enough. As at Subsection \ref{subsection_Black_Scholes}, to assume that $\rho_0$ is $T$-periodic means that Model \eqref{fractional_stochastic_volatility_model} is appropriate for assets with a volatility process having {\it similar} trends on each interval $[(i - 1)T,iT]$, typically each day ($T = 24$h). When $T = 24$h, to assume $\delta$ large enough means to avoid enough days between two days during which the volatility process is observed in order to estimate $\rho_0$ with our method.
%

% Section : Numerical experiments.

%
\section{Numerical experiments}\label{section_numerical_experiments}
This section deals with numerical experiments in Model \eqref{main_equation} when $M$ is the Molchan martingale, and in the non-autonomous Black-Scholes model.
%

% Subsection : Experiments in Model (main_equation) driven by the Molchan martingale.

%
\subsection{Experiments in Model \eqref{main_equation} driven by the Molchan martingale}
Some numerical experiments on our estimation method of $J_0$ in Equation \eqref{main_equation} are presented in this subsection when $M$ is the Molchan martingale:
\begin{displaymath}
M_t =
\int_{0}^{t}\ell(t,s)dB_s =
(2 - 2H)^{1/2}
\int_{0}^{t}s^{1/2 - H}dW_s
\textrm{$;$ }
t\in [0,1]
\end{displaymath}
with $H\in\{0.6,0.9\}$ and $W$ the Brownian motion driving the Mandelbrot-Van Ness representation of the fractional Brownian motion $B$. The estimation method investigated on the theoretical side at Section \ref{risk_bounds_model_selection_section} is implemented here for the three following examples of functions $J_0$:
\begin{displaymath}
J_{0,1} : t\in [0,1]\mapsto 10t^2,\quad
J_{0,2} : t\in (0,1]\mapsto 10(-\log(t))^{1/2}\quad {\rm and}\quad
J_{0,3} : t\in (0,1]\mapsto 20t^{-0.05}.
\end{displaymath}
These functions belong to $\mathbb L^2([0,1],d\langle M\rangle_t)$ as required. Indeed, on the one hand, $J_{0,1}$ is continuous on $[0,1]$ and
\begin{eqnarray*}
 -\int_{0}^{1}\log(t)d\langle M\rangle_t & = &
 -(2 - 2H)\int_{0}^{1}\log(t)t^{1 - 2H}dt\\
 & = &
 \lim_{\varepsilon\rightarrow 0^+}
 \log(\varepsilon)\varepsilon^{2 - 2H} +
 \int_{0}^{1}t^{1 - 2H}dt =
 \frac{1}{2 - 2H} <\infty.
\end{eqnarray*}
On the other hand, for every $\alpha\in (0,1/2)$ such that $H\in (1/2,1 -\alpha)$,
\begin{eqnarray*}
 \int_{0}^{1}t^{-2\alpha}d\langle M\rangle_t & = &
 (2 - 2H)\int_{0}^{1}t^{1 - 2\alpha - 2H}dt\\
 & = &
 \frac{2 - 2H}{2(1 -\alpha - H)}\left(1 -
 \lim_{\varepsilon\rightarrow 0^+}\varepsilon^{2(1 -\alpha - H)}\right) =
 \frac{1 - H}{1 -\alpha - H} <\infty.
\end{eqnarray*}
Since for every $t\in (0,1]$, $J_{0,3}(t) = 20t^{-\alpha}$ with $\alpha = 0.05$, and since $H\in\{0.6,0.9\}\subset (0.5,0.95)$ in our numerical experiments, $J_{0,3}$ is square-integrable with respect to $d\langle M\rangle_t$.\\
Our adaptive estimator is computed for $J_0 = J_{0,1}$, $J_{0,2}$ and $J_{0,3}$ on $N = 100$ paths of the process $Z$ observed along the dissection $\{l/n\textrm{$;$ }l = 1,\dots,n\}$ of $[1/n,1]$ with $n = 5000$, when $(\varphi_1,\dots,\varphi_m)$ is the $m$-dimensional trigonometric basis for every $m\in\{2,\dots,12\}$. Note that $n$ is of order $N^2$ as suggested in the remark following Proposition \ref{risk_bound_discrete}. This experiment is repeated $100$ times, and the means and the standard deviations of the MISE of $\widehat J_{\widehat m,N,n}$ (see Subsection \ref{projection_LS_estimator_definition_subsection}) are stored in Table \ref{table_MISE}.
\begin{table}[!h]
\begin{center}
\begin{tabular}{|l||c|c|c||c|c|c|}
 \hline
 $J_0; H$ & $J_{0,1}; 0.6$ & $J_{0,2}; 0.6$ & $J_{0,3}; 0.6$ &
 $J_{0,1}; 0.9$ & $J_{0,2}; 0.9$ & $J_{0,3}; 0.9$\\
 \hline
 \hline
 Mean MISE & 0.047 & 0.103 & 0.076 & 0.135 & 0.300 & 0.287\\
 \hline
 StD MISE & 0.031 & 0.029 & 0.033 & 0.076 & 0.118 & 0.084\\
 \hline
\end{tabular}
\medskip
\caption{Means and StD of the MISE of $\widehat J_{\widehat m,N,n}$ (100 repetitions).}\label{table_MISE}
\end{center}
\end{table}
Moreover, for $H = 0.6$, $10$ estimations (dashed black curves) of $J_{0,1}$, $J_{0,2}$ and $J_{0,3}$ (red curves) are respectively plotted on Figures \ref{J_1_plot}, \ref{J_2_plot} and \ref{J_3_plot}.
\begin{figure}[!h]
\centering
\includegraphics[scale=0.4]{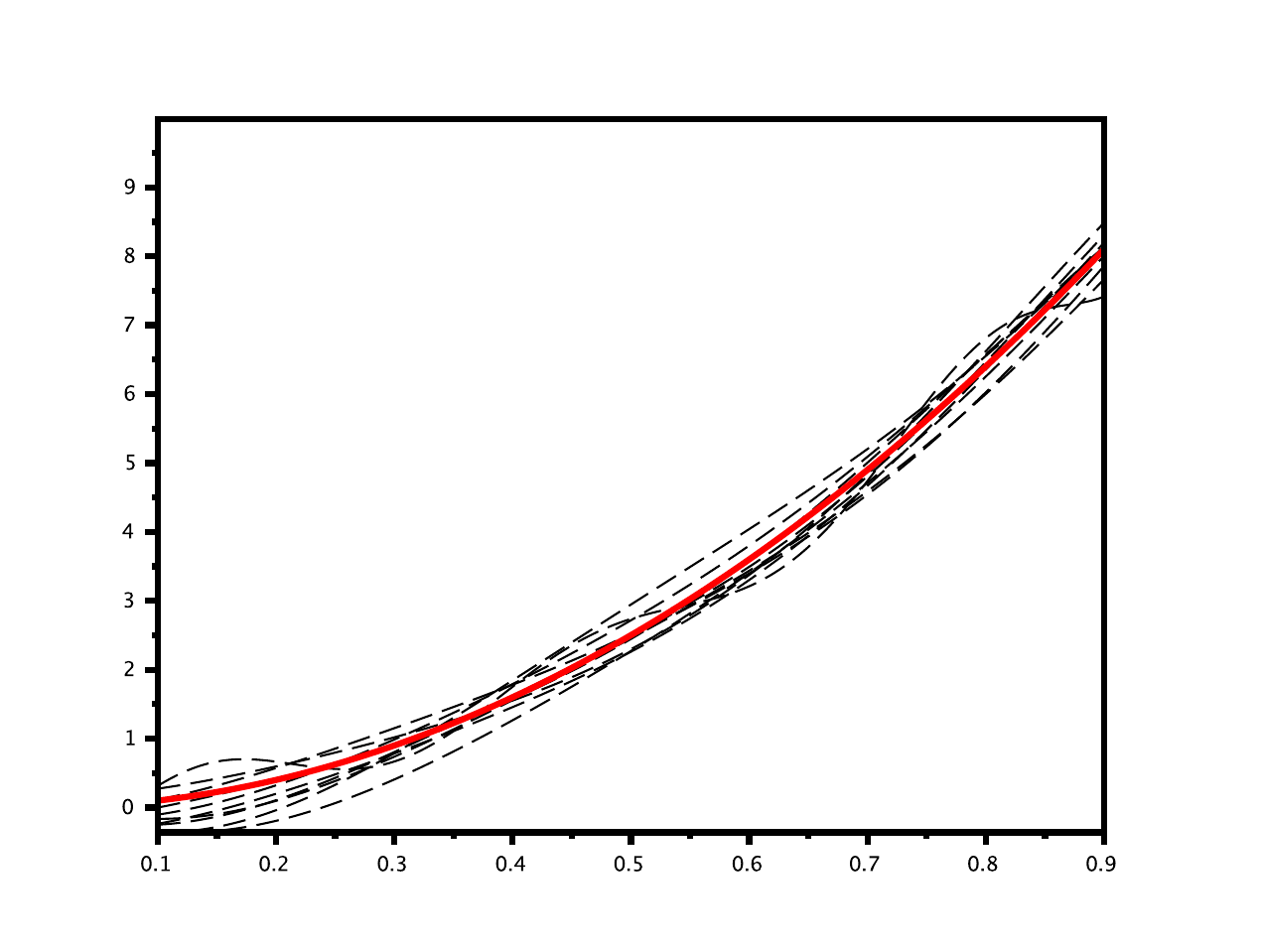} 
\caption{Plots of $J_{0,1}$ and of $10$ adaptive estimations when $H = 0.6$ ($\overline{\widehat m} = 5.4$).}
\label{J_1_plot}
\end{figure}
\begin{figure}[!h]
\centering
\includegraphics[scale=0.4]{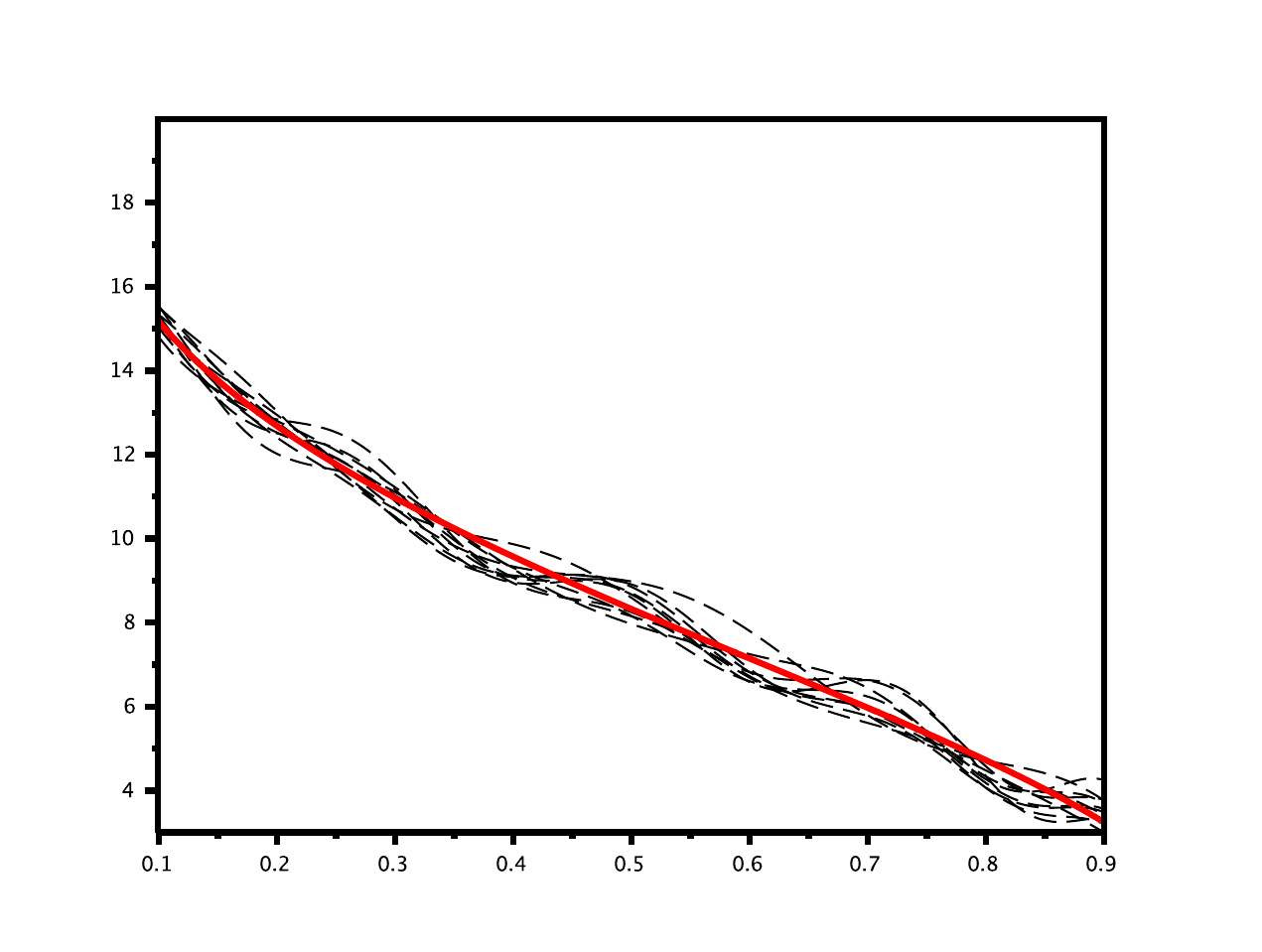} 
\caption{Plots of $J_{0,2}$ and of $10$ adaptive estimations when $H = 0.6$ ($\overline{\widehat m} = 11.2$).}
\label{J_2_plot}
\end{figure}
\begin{figure}[!h]
\centering
\includegraphics[scale=0.4]{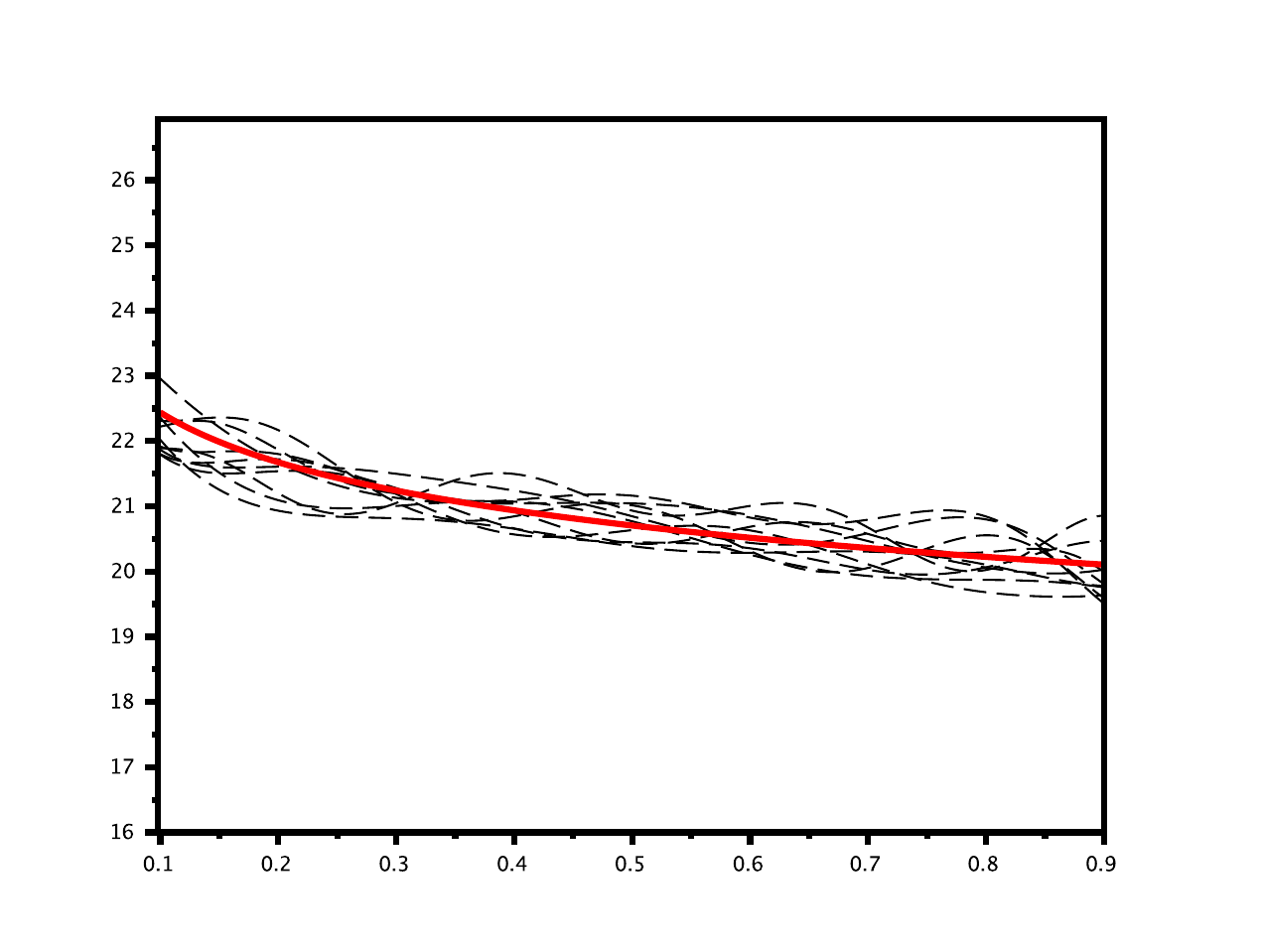} 
\caption{Plots of $J_{0,3}$ and of $10$ adaptive estimations when $H = 0.6$ ($\overline{\widehat m} = 8.2$).}
\label{J_3_plot}
\end{figure}
On average, the MISE of our adaptive estimator is lower for the three examples of functions $J_0$ when $H = 0.6$ than when $H = 0.9$. The standard deviation of the MISE of $\widehat J_{\widehat m,N,n}$ is also higher when $H = 0.9$. For $H = 0.6$, our estimation method seems stable in the sense that the standard deviation of the MISE of our adaptive estimator is almost the same for the three examples of functions $J_0$. This can be observed on Figures \ref{J_1_plot}, \ref{J_2_plot} and \ref{J_3_plot}. For $H = 0.9$, our estimation method seems less stable. Finally, for both $H = 0.6$ and $H = 0.9$, on average, the MISE of $\widehat J_{\widehat m,N,n}$ is higher for $J_{0,2}$ than for $J_{0,1}$ and $J_{0,3}$. This is probably related to the fact that $J_{0,2}(t)$ goes faster to infinity when $t\rightarrow 0^+$ than $J_{0,3}(t)$, and of course than $J_{0,1}(t)$ which doesn't go. In conclusion, the numerical experiments show that when $Z$ is driven by the Molchan martingale, our estimation method of $J_0$ is satisfactory on several types of functions, but the MISE of $\widehat J_{\widehat m,N,n}$ seems to increase when $H$ is near to $1$.
%

% Subsection : Experiments in the non-autonomous Black-Scholes model.

%
\subsection{Experiments in the non-autonomous Black-Scholes model}
Some numerical experiments on our estimation method of $b_0$ in the non-autonomous Black-Scholes model \eqref{Black_Scholes_model} are presented in this subsection on simulated prices datasets with $S_0 = 10$, $\sigma\in\{0.2,1\}$ and $b_0 =\mathfrak b$, where
\begin{displaymath}
\mathfrak b(t) :=
\sin(2\pi t) +\cos(2\pi t)
\textrm{$;$ }
\forall t\in\mathbb R_+.
\end{displaymath}
The function $\mathfrak b$ is estimated on $[0,1]$ (1 day), but the prices process $S$ of the asset is simulated on $N = 100$ days via the non-autonomous Black-Scholes model \eqref{Black_Scholes_model}. As explained at Subsection \ref{subsection_Black_Scholes}, here, $Z^1,\dots,Z^N$ are obtained via \eqref{process_Z_Black_Scholes} from the i.i.d. processes $S^1,\dots,S^N$ defined by
\begin{displaymath}
S_{t}^{i} := S_{i - 1 + t}
\textrm{$;$ }\forall t\in [0,1].
\end{displaymath}
Our adaptive estimator is computed for $S_0 = 10$ and $\sigma\in\{0.2,1\}$ on the paths of $Z^1,\dots,Z^N$ obtained from one path of the prices process observed along the dissection $\{100l/n\textrm{$;$ }l = 0,\dots,n\}$ of $[0,100]$ with $n = N^2 = 10000$, when $(\varphi_1,\dots,\varphi_m)$ is the $m$-dimensional trigonometric basis for every $m\in\{2,\dots,12\}$. Note that for $\sigma = 0.2$, the path of $S$ is plotted on Figure \ref{BS_02}, and the associated paths of $Z^1,\dots,Z^N$ are plotted on Figure \ref{Z_02}.
\begin{figure}[!h]
\centering
\includegraphics[scale=0.4]{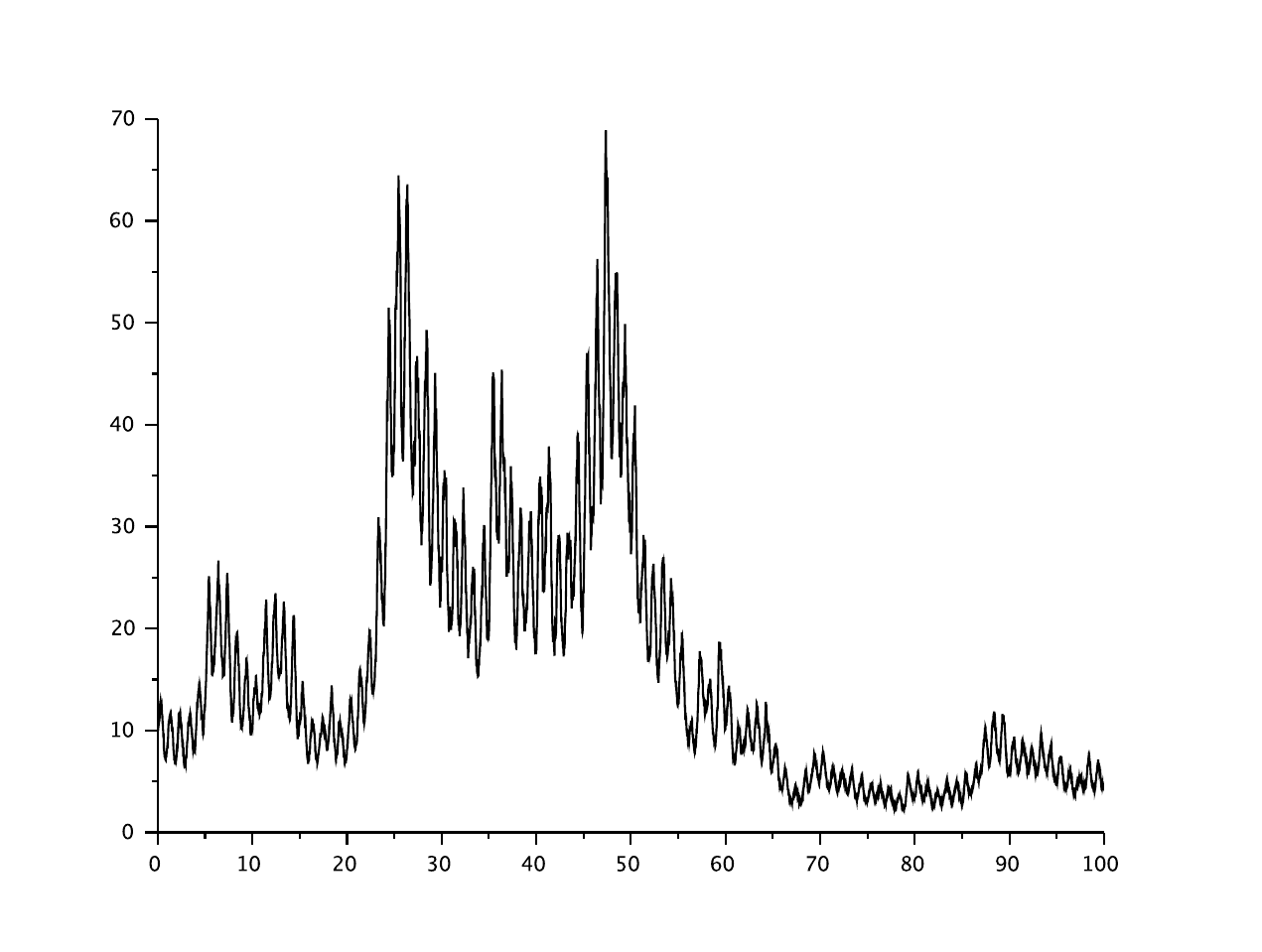}
\caption{Plot of one path of the non-autonomous Black-Scholes model with $\sigma = 0.2$.}
\label{BS_02}
\end{figure}
\begin{figure}[!h]
\centering
\includegraphics[scale=0.4]{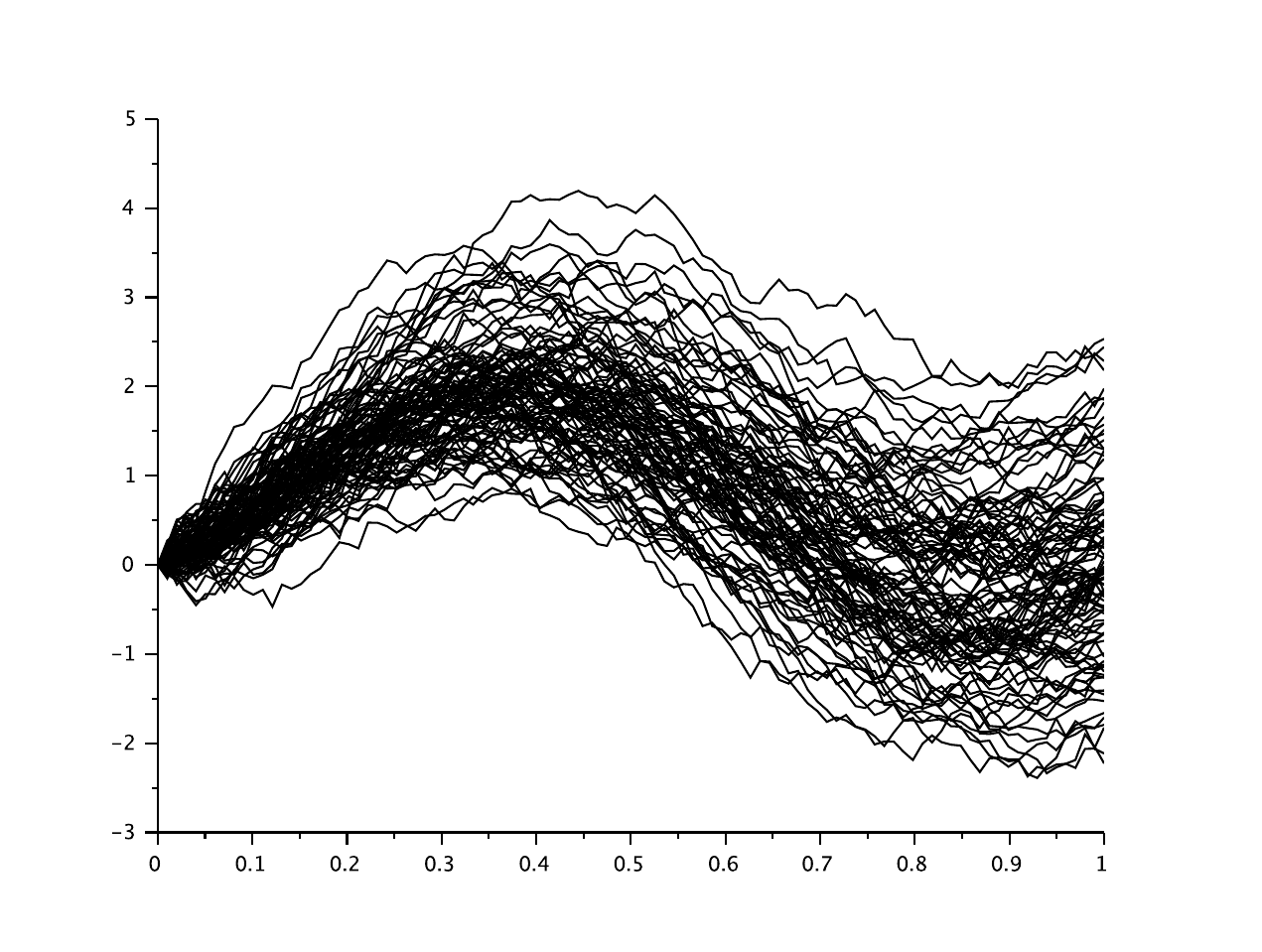}
\caption{Plots of the paths of $Z^1,\dots,Z^N$ associated to the path of $S$ at Figure \ref{BS_02}.}
\label{Z_02}
\end{figure}
This experiment is repeated 100 times, and the means and the standard deviations of the MISE of $\widehat b_{\widehat m,N,n} :=\sigma\widehat J_{\widehat m,N,n}$ are stored in Table \ref{table_MISE_BS}.
\begin{table}[!h]
\begin{center}
\begin{tabular}{|l||c|c|c||c|c|c|}
 \hline
 $\sigma$ & $0.2$ & $1$\\
 \hline
 \hline
 Mean MISE & 0.002 & 0.042\\
 \hline
 StD MISE & 0.002 & 0.046\\
 \hline
\end{tabular}
\medskip
\caption{Means and StD of the MISE of $\widehat b_{\widehat m,N,n}$ (100 repetitions).}\label{table_MISE_BS}
\end{center}
\end{table}
Moreover, for $\sigma = 0.2$ and $\sigma = 1$, 10 estimations (dashed black curves) of $b_0 =\mathfrak b$ (red curve) are respectively plotted on Figures \ref{BS_function_02} and \ref{BS_function_1}.
\begin{figure}[!h]
\centering
\includegraphics[scale=0.4]{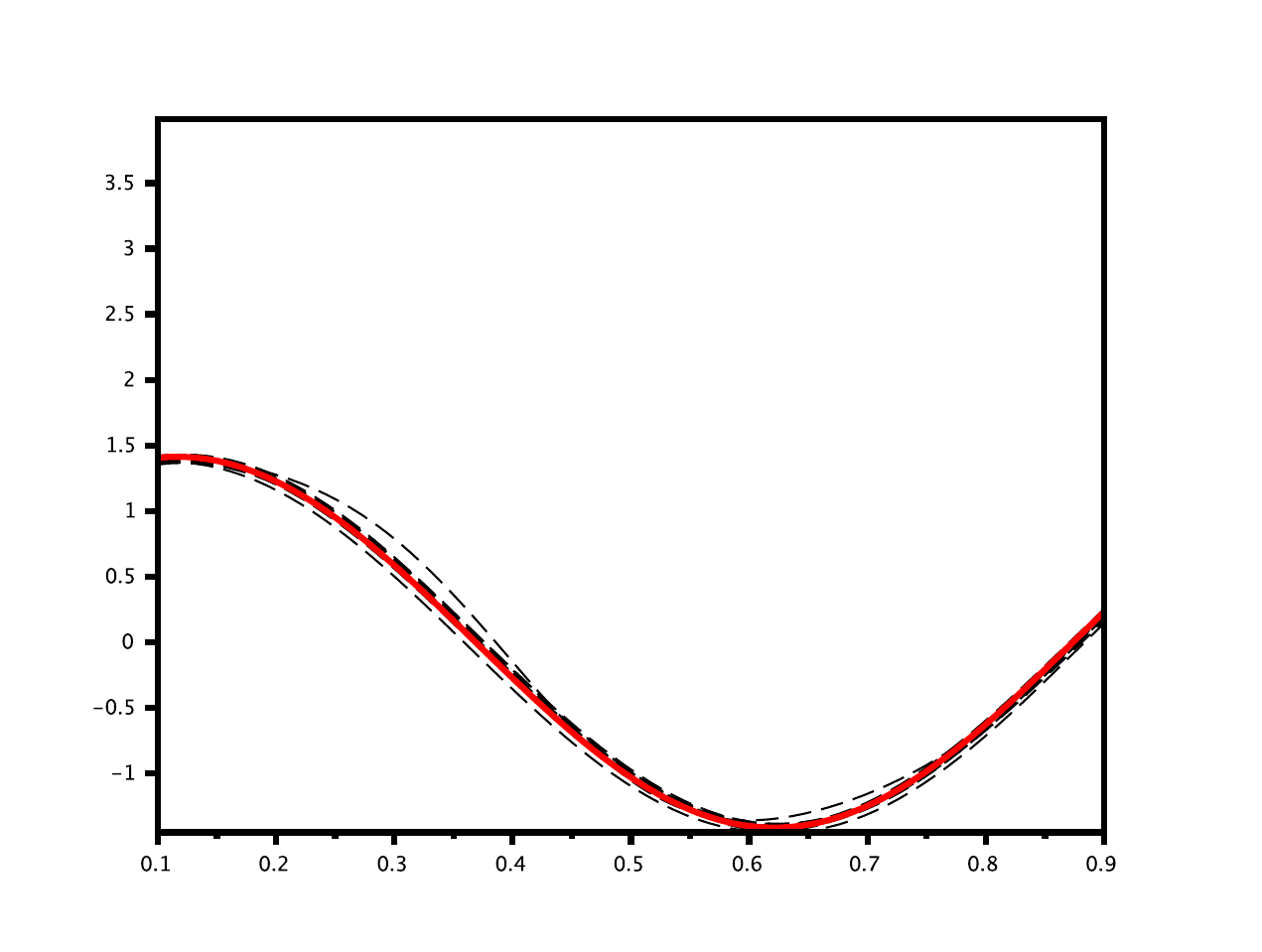}
\caption{Plots of $\mathfrak b$ and of $10$ adaptive estimations when $\sigma = 0.2$ ($\overline{\widehat m} = 3.3$).}
\label{BS_function_02}
\end{figure}
\begin{figure}[!h]
\centering
\includegraphics[scale=0.4]{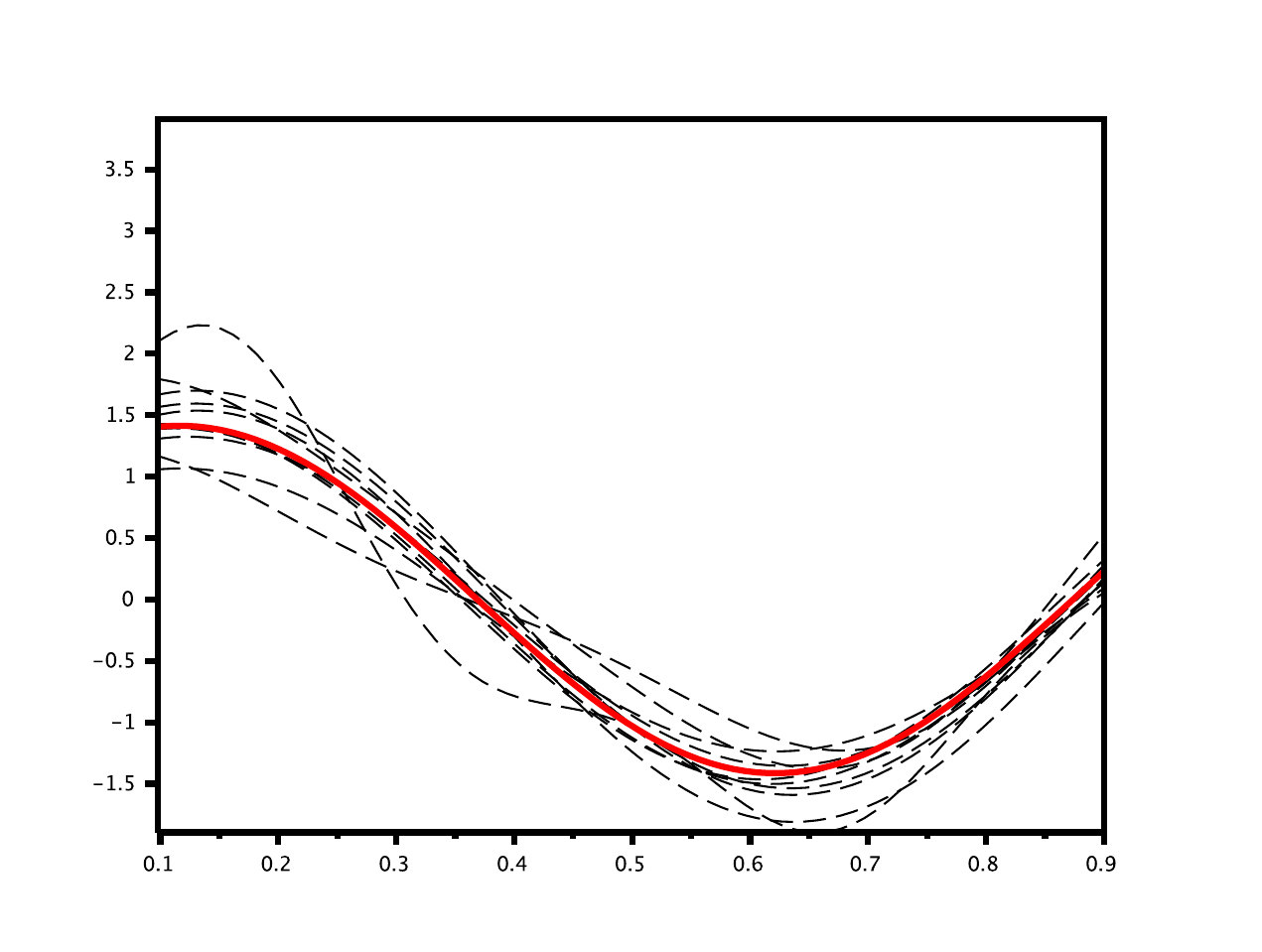}
\caption{Plots of $\mathfrak b$ and of $10$ adaptive estimations when $\sigma = 1$ ($\overline{\widehat m} = 3.5$).}
\label{BS_function_1}
\end{figure}
On average, the MISE of our adaptive estimator is obviously lower when $\sigma = 0.2$ than when $\sigma = 1$, but it remains globally small. Assume now that $\sigma$ is unknown as in practice. Then, it is estimated directly on the observed path of $S$ on $[0,100]$ by
\begin{displaymath}
\widehat\sigma_{N,n} :=
\sqrt{\frac{1}{T_{N,n}}\sum_{l = 0}^{n - 1}
[\log(S_{(l + 1)N/n}) -\log(S_{lN/n})]^2}
\quad{\rm with}\quad T_{N,n} = nN
\end{displaymath}
as usual (see Genon-Catalot \cite{GC12}, Subsubsection 3.2.2). The same experiment is then repeated on
\begin{displaymath}
\widetilde b_{\widehat m,N,n} :=\widehat\sigma_{N,n}\widehat J_{\widehat m,N,n}
\end{displaymath}
instead of $\widehat b_{\widehat m,N,n}$. The means and the standard deviations of the MISE of $\widetilde b_{\widehat m,N,n}$ remain of same order (see Table \ref{table_MISE_BS_sigma_unknown}).
\begin{table}[!h]
\begin{center}
\begin{tabular}{|l||c|c|c||c|c|c|}
 \hline
 $\sigma$ (mean $\widehat\sigma_{N,n}$) & $0.2$ (0.223) & $1$ (1.005)\\
 \hline
 \hline
 Mean MISE & 0.001 & 0.042\\
 \hline
 StD MISE & 0.001 & 0.038\\
 \hline
\end{tabular}
\medskip
\caption{Means and StD of the MISE of $\widetilde b_{\widehat m,N,n}$ (100 repetitions).}\label{table_MISE_BS_sigma_unknown}
\end{center}
\end{table}
\newline
\newline
{\bf Acknowledgments.} Thank you to Fabienne Comte for her valuable comments on this paper.
%

% References.

%

%
\end{document}